\newcommand{\Ext}{\operatorname{Ext}}
\newcommand{\Tor}{\operatorname{Tor}}
\newcommand{\Hom}{\operatorname{Hom}}
\newcommand{\HH}{{\operatorname H}}
\newcommand{\dell}{\partial}
\newcommand{\fm}{{\mathfrak{m}}}
\newcommand{\Ann}{\operatorname{Ann}}
\newcommand{\grade}{\operatorname{grade}}
\newcommand{\rank}{\operatorname{rank}}
\newcommand{\depth}{\operatorname{depth}}
\newcommand{\pd}{\operatorname{pd}}
\newcommand{\fd}{\operatorname{fd}}
\newcommand{\ch}{\operatorname{char}}
\newcommand{\ZZ}{\mathbb{Z}}
\newcommand{\NN}{\mathbb{N}}
\newcommand{\ra}{\rightarrow}
\newcommand{\xra}{\xrightarrow}
\newcommand{\var}{{\hskip1pt\vert\hskip1pt}}
\newcommand{\dbl}{[\hspace{-1.5pt}[}
\newcommand{\dbr}{]\hspace{-1.5pt}]}
\theoremstyle{plain}
\newtheorem{theorem}{Theorem}[section]
\newtheorem{lemma}[theorem]{Lemma}
\newtheorem{corollary}[theorem]{Corollary}
\newtheorem{fact}[theorem]{Fact}
\theoremstyle{definition}
\newtheorem{definition}[theorem]{Definition}
\newtheorem{example}[theorem]{Example}
\newtheorem{construction}[theorem]{Construction}
\theoremstyle{remark}
\newtheorem{remark}[theorem]{Remark}
\newcolumntype{C}{>{\centering\arraybackslash}X}
\begin{document} \title[Existence of Totally Reflexive Modules]{Existence of Totally Reflexive Modules via Gorenstein Homomorphisms}
\author[K.\ A.\ Beck]{Kristen A.\ Beck} 
\address{Kristen A.\ Beck, Department of Mathematics, University of Texas at Arlington, Box 19408, Arlington, TX 76019, U.\ S.\ A.} 
\email{kbeck@uta.edu}

\thanks{{\em Date.} \today.} \thanks{{\em 2010 Mathematics Subject Classification.} 13C13, 13D05, 13D07}
\thanks{{\em Key words and phrases.} Totally acyclic complex, totally reflexive module, Gorenstein
homomorphism, embedded deformation.}

\maketitle

\begin{abstract} We define, via Gorenstein homomorphisms, a class of local rings over which there exist
non-trivial totally reflexive modules.  We also provide a general construction of such rings, which
indicates their abundance. \end{abstract}

\section*{Introduction}
In 1967, Auslander first introduced the notion of a totally reflexive module when he defined Gorenstein dimension in \cite{Au}.  Since that time, totally reflexive modules have been studied extensively.  The goal of this paper is to investigate the existence of such modules.  On one hand, their existence is always guaranteed, as every projective module is trivially totally reflexive.  A more interesting problem, however, is to understand the prevalence of non-trivial (that is, non-projective) totally reflexive modules.  It is well known that over a Gorenstein local ring, it is precisely the maximal Cohen-Macaulay modules which are totally reflexive; when the ring is also non-regular, these modules are non-free.  However, over a non-Gorenstein ring, non-trivial totally reflexive modules are much more elusive.

There has been some work done to answer this existence question over non-Gorenstein rings.  Among others to study this problem (see, for example, \cite{AM}, \cite{T}, \cite{TW}, \cite{Yo}),  Avramov, Gasharov, and Peeva showed in \cite[Theorem 3.2]{AGP} that any local ring $R$ with an embedded deformation $S\to S/(x_1,\ldots,x_n)\cong R$, with $x_1,\ldots,x_n$ an $S$-regular sequence, admits non-trivial totally reflexive modules.  In this paper, we weaken the hypotheses of \cite[Theorem 3.2]{AGP} by considering a local ring homomorphism $S\to S/I\cong R$ where $I$ is a Gorenstein ideal of $S$, rather than an ideal generated by a regular sequence.  In this generality one would not expect such an $R$ to always admit non-trivial totally reflexive modules.  However, we prove in our main theorem that when a particular lifting of $R$ to a Gorenstein ring exists, the desired result is obtained.  As regular sequences always lift modulo certain surjective ring homomorphisms, our result recovers that of \cite{AGP}, mentioned above.  This result, along with several corollaries, is given in Section 2.

In Section 3 we provide a general construction for rings of arbitrary grade which satisfy the hypotheses of our result, and therefore admit non-trivial totally reflexive modules.  Subsequently, we give a specific example which, using machinery established in Section 4, we show does not have an embedded deformation.

\section{Preliminaries}
Throughout, let $(R,\fm,k)$ denote a commutative local (meaning also Noetherian) ring with maximal ideal $\fm$ and residue class field $k=R/\fm$.  All modules will be finitely generated, and we shall reserve the symbol $\mu_R(M)$ for the minimal number of generators of the $R$-module $M$.  Furthermore, by $(-)^*$ we denote the ring dual $\Hom_R(-,R)$.
\subsection{Total reflexivity}
Let $M$ be a finitely generated module over $R$.  Then $M$ is said to be \emph{totally reflexive} if each of the following conditions hold:
\begin{enumerate}
\item The canonical map $M\to\Hom_R(M^*,R)$ is an isomorphism.
\item $\Ext_R^i(M,R)=0$ for all $i>0$.
\item $\Ext_R^i(M^*,R)=0$ for all $i>0$.
\end{enumerate}
Moreover, we say that $M$ is \emph{non-trivial} if it is not free.

Recall that an acyclic complex $\mathbf{C}=(C_i,\dell_i)_{i\in\ZZ}$ of finitely generated free $R$-modules is called \emph{minimal} if $\dell_i(C_i)\subseteq\fm C_{i-1}$ for all $i\in\ZZ$ (for example, see \cite[Proposition 8.1]{AM}).  Furthermore, if $\mathbf{C}$ is such that $\mathbf{C}^*=(C_i^*,\dell_i^*)_{i\in\ZZ}$ is also acyclic, then it is said to be a (\emph{minimal}) \emph{totally acyclic complex}.  Any totally acyclic complex which is minimal and nonzero is \emph{non-trivial}.

Given a totally acyclic complex $\mathbf{C}=(C_i,\dell_i)_{i\in\ZZ}$ of $R$-modules, we have that $\Omega^i\mathbf{C}=\ker\dell_i$ is a totally reflexive $R$-module for every $i\in\ZZ$.  Conversely, it is easy to see that every totally reflexive module gives rise to a totally acyclic complex.  Thus, the existence of totally reflexive modules and totally acyclic complexes are equivalent notions.

\subsection{Gorenstein homomorphisms}
For finitely generated $Q$-modules $M$ and $N$, we define the \emph{grade of $N$ on $M$} by
\[
\grade_Q(N,M)=\min\{i\var\Ext_Q^i(N,M)\neq 0\}.
\]
Note that this quantity is well-defined over a non-local ring.  In fact, the quantity $\grade_Q(N,M)$ is simply the common length of the maximal $M$-regular sequences contained in $\Ann_Q N$.  We also speak of the \emph{grade} of a finitely generated $Q$-module $M$, which is defined by
\[
\grade_Q M=\min\{i\var\Ext_Q^i(M,Q)\neq 0\}.
\]
Thus, $\grade_Q M$ is simply $\grade_Q(M,Q)$, as defined above.

A finitely generated $Q$-module $M$ is called \emph{perfect} if $\pd_Q M=\grade_Q M$.  It is
easy to see that if $Q$ is Cohen-Macaulay and $\pd_Q M<\infty$, then $M$ is perfect if and only if it is
Cohen-Macaulay.  Moreover, let $I\subseteq Q$ be an ideal such that $Q/I$ is a perfect $Q$-module. Then $I$ is
called a \emph{Gorenstein ideal} if
\[
\Ext_Q^{\pd_Q I}(Q/I,Q)\cong Q/I.
\]
Notice that if $Q$ is Gorenstein and $R/I$ is a perfect $Q$-module, then $I$ is a Gorenstein ideal if and only if $Q/I$ is a Gorenstein ring.

A surjective ring homomorphism $\varphi:Q\ra R$ is called \emph{Cohen-Macaulay} if $R$ is a
perfect $Q$-module.  Moreover, $\varphi$ is called \emph{Gorenstein} if it is perfect and $\ker\varphi$ is a Gorenstein ideal of $Q$.   With these definitions, notice that the image of a Cohen-Macaulay ring under a Cohen-Macaulay ring homomorphism is
also Cohen-Macaulay.  Moreover, the image of a Gorenstein ring under a Gorenstein ring homomorphism is
Gorenstein.

\subsection{Lifting}
Let $\varphi:P\ra Q$ be ring homomorphism and $M$ a finitely generated $Q$-module. If
there exists a finitely generated $P$-module $N$ such that \begin{enumerate} \item $M\cong Q\otimes_P N$ and \item
$\Tor_i^P(Q,N)=0$ for all $i>0$ \end{enumerate} then $M$ is said to \emph{lift} to $P$ via $Q$.  In this
case, $N$ is called a \emph{lifting} of $M$.

\section{Existence of Non-trivial Totally Acyclic Complexes}

The framework of our main theorem is motivated by the phenomenon apparent in the most general class of rings
previously known to always admit non-trivial totally acyclic complexes: local rings having an embedded
deformation. The next result is a consequence of the explicit construction in \cite[Theorem 3.2]{AGP}
due to Avramov, Gasharov, and Peeva.  First we give the definition.

\begin{definition} A local ring $R$ is said to have an \emph{embedded deformation} if there exists a local
ring $S$ and an $S$-regular sequence $x_1,\ldots,x_n\subseteq\fm_S^2$ such that $R\cong S/(x_1,\ldots,x_n)$.
\end{definition}

\begin{theorem}\label{thm:embdef}\cite[Theorem 3.2]{AGP} Let $R$ be a local ring which has an embedded deformation.  Then there exist
non-trivial totally acyclic complexes over $R$. \end{theorem}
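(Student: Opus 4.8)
The plan is to reduce the hypothesis to a one-parameter embedded deformation and then write down an explicit minimal matrix factorization, whose reduction modulo the deforming element will be the desired complex.

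First I would reduce to the case $n=1$. Setting $S'=S/(x_1,\dots,x_{n-1})$, the fact that $x_1,\dots,x_n$ is an $S$-regular sequence means the image $x$ of $x_n$ in $S'$ is a nonzerodivisor, and $x\in\fm_{S'}^2$ since $x_n\in\fm_S^2$; moreover $R\cong S'/(x)$. So henceforth I take $S$ to be an arbitrary local ring, $x\in\fm_S^2$ a nonzerodivisor, $R=S/(x)$, and I look for a minimal, nonzero, totally acyclic complex over $R$.

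Next I would construct a minimal matrix factorization of $x$. Because $x\in\fm_S^2$ we may write $x=\sum_{i=1}^m c_id_i$ with all $c_i,d_i\in\fm_S$ and $m\geq 1$. On the exterior algebra $E=\bigwedge_S S^m$ (with basis $e_1,\dots,e_m$), let $\alpha$ be left exterior multiplication by $\sum_i c_ie_i$ and $\beta$ the interior product against $\sum_i d_ie_i^*$. The standard anticommutation identity gives $\alpha\beta+\beta\alpha=\bigl(\sum_i c_id_i\bigr)\cdot\mathrm{id}_E=x\cdot\mathrm{id}_E$ while $\alpha^2=\beta^2=0$. Since $\alpha$ and $\beta$ each interchange the even and odd parts $E_0$ and $E_1$ of $E$, the operator $\alpha+\beta$ restricts to $S$-linear maps $\phi\colon E_0\ra E_1$ and $\psi\colon E_1\ra E_0$ with $\psi\phi=x\cdot\mathrm{id}_{E_0}$ and $\phi\psi=x\cdot\mathrm{id}_{E_1}$; the matrices of $\phi,\psi$ have all entries in $\fm_S$, and $E_0,E_1$ are free $S$-modules of rank $2^{m-1}\geq 1$.

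Finally I would reduce modulo $x$. Writing $\bar\phi,\bar\psi$ for the induced $R$-linear maps between $\bar E_0=E_0\otimes_S R$ and $\bar E_1=E_1\otimes_S R$, let $\mathbf C$ be the $2$-periodic complex $\cdots\ra\bar E_0\xra{\bar\phi}\bar E_1\xra{\bar\psi}\bar E_0\xra{\bar\phi}\bar E_1\ra\cdots$. I would check acyclicity directly: $x$ being a nonzerodivisor forces $\phi$ and $\psi$ to be injective (from $xy=\phi\psi(y)$ and $xy=\psi\phi(y)$ on the free modules), and if $\psi(g)\in xE_0=\psi\phi(E_0)$ then $\psi\bigl(g-\phi(f)\bigr)=0$ for a suitable $f$, whence $g=\phi(f)$; the symmetric statement at the other spot gives exactness throughout. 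Dualizing into $R$ carries $(\phi,\psi)$ to $(\psi^*,\phi^*)$, again a matrix factorization of $x$, so $\mathbf C^*$ is acyclic by the same argument and $\mathbf C$ is totally acyclic. Since the entries of $\bar\phi,\bar\psi$ lie in $\fm_R$ and $\bar E_0,\bar E_1\neq 0$, the complex $\mathbf C$ is minimal and nonzero, hence non-trivial. I expect the only genuine content to be the construction of a matrix factorization that is at once minimal and nonzero (achieved above via the Koszul/Clifford pattern), while the verifications of acyclicity, self-duality, and minimality are routine; this reproves the explicit construction of \cite[Theorem~3.2]{AGP}.
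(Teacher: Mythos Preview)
Your argument is correct: the reduction to a single nonzerodivisor $x\in\fm_S^2$, the Koszul--Clifford matrix factorization $(\phi,\psi)$ of $x$ on the exterior algebra, and the verification that its reduction modulo $x$ is a minimal non-trivial totally acyclic complex are all sound. This is essentially the explicit construction of \cite[Theorem~3.2]{AGP} itself.

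The paper, however, deliberately takes a different route. Rather than building a matrix factorization, it realizes Theorem~\ref{thm:embdef} as a special case of Theorem~\ref{thm:main}: given the embedded deformation $Q\to Q/(x_1,\dots,x_c)\cong R$, one writes $x_i=\sum_j r_{ij}z_j$ with $z_j$ a minimal generating set of $\fm_Q$ and $r_{ij}\in\fm_Q$, introduces a regular local ring $P=\ZZ[\rho_{ij},\xi_j]_{(p,\rho_{ij},\xi_j)}$ mapping to $Q$ by $\rho_{ij}\mapsto r_{ij}$, $\xi_j\mapsto z_j$, sets $S=P/(\chi_1,\dots,\chi_c)$ with $\chi_i=\sum_j\rho_{ij}\xi_j$, and checks that $S$ is a lifting of $R$ to $P$ via $Q$ satisfying the grade hypothesis of Theorem~\ref{thm:main}. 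Your approach is more elementary and yields an explicit $2$-periodic complex, but it does not interact with the Gorenstein-homomorphism framework; the paper's approach is less direct but serves its stated purpose of exhibiting the embedded-deformation case as an instance of the main theorem, thereby showing that Theorem~\ref{thm:main} genuinely generalizes \cite[Theorem~3.2]{AGP}.
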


This result will prove to be corollary to our main theorem.  The proof we give below is completely different than the construction in \cite{AGP}.

\medskip

The method used in our main result relies on the following result concerning the descent of totally reflexive modules along homomorphisms of finite flat dimension, cf.\ \cite[5.6(b)]{CH}.

\begin{lemma}\label{lem:finflatdim} Let $Q$ be a non-regular Gorenstein ring and $\varphi:Q\ra R$ a local homomorphism of finite flat dimension. Then there exist non-trivial totally acyclic complexes over $R$. \end{lemma}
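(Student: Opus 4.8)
The plan is to exploit the fact that a non-regular Gorenstein ring $Q$ always admits non-trivial totally acyclic complexes — indeed, over a Gorenstein ring the totally reflexive modules are exactly the maximal Cohen–Macaulay modules, and since $Q$ is non-regular, the syzygies of the residue field $Q/\fm_Q$ provide non-free totally reflexive $Q$-modules (equivalently, the minimal free resolution of a high syzygy of $k_Q$, spliced with its dual, is a non-trivial totally acyclic complex). So I start with such a complex $\mathbf{C}$ over $Q$ and try to transport it along $\varphi\colon Q\to R$. The natural candidate is $R\otimes_Q\mathbf{C}$.

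First I would record that, since $\varphi$ has finite flat dimension, $\operatorname{Tor}_i^Q(R,-)$ vanishes for $i\gg 0$ on every finitely generated module, and in particular on the totally reflexive $Q$-module $G=\Omega^0\mathbf{C}=\ker\dell_0$. Replacing $\mathbf{C}$ by a sufficiently high shift, I may assume all the relevant syzygies $\Omega^i\mathbf{C}$ have $\operatorname{Tor}_j^Q(R,\Omega^i\mathbf{C})=0$ for all $j>0$ — here one uses that a totally acyclic complex is its own resolution in both directions, so dimension-shifting pushes the vanishing range uniformly. Consequently each short exact sequence $0\to\Omega^{i+1}\mathbf{C}\to C_i\to\Omega^i\mathbf{C}\to 0$ stays exact after $R\otimes_Q-$, which shows that $R\otimes_Q\mathbf{C}$ is an acyclic complex of finitely generated free $R$-modules. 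Dually, because $\mathbf{C}^*$ is also totally acyclic over $Q$ and $R$ is a perfect (Cohen–Macaulay) $Q$-module of finite flat dimension, one checks $\operatorname{Hom}_Q(\mathbf{C},R)$ computes the same $\operatorname{Tor}$'s up to a shift via the Gorenstein duality on $Q$, and $(R\otimes_Q\mathbf{C})^*\cong\operatorname{Hom}_Q(\mathbf{C},R)$ is acyclic as well. Thus $R\otimes_Q\mathbf{C}$ is totally acyclic over $R$; minimality is inherited since $\varphi$ is local, so $\dell(\mathbf{C})\subseteq\fm_Q\mathbf{C}$ maps into $\fm_R(R\otimes_Q\mathbf{C})$. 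This would already handle the case $\fd_Q R=0$, i.e. $R$ flat over $Q$, almost tautologically; the content is the finite-flat-dimension case.

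The main obstacle is \emph{non-triviality}: a priori $R\otimes_Q\mathbf{C}$ could be the trivial complex, i.e. split into a direct sum of contractible pieces over $R$, which happens precisely when $G$ base-changes to a free $R$-module $R\otimes_Q G$. To rule this out I would argue by contradiction: if $R\otimes_Q G$ were $R$-free for the syzygy $G$ of $k_Q$, then since $G$ has infinite projective dimension over $Q$ (as $Q$ is non-regular and $G$ is a high syzygy of the residue field, so $\pd_Q G=\infty$), one gets a contradiction with the finiteness of $\fd_Q R$ via a change-of-rings / Auslander–Buchsbaum type inequality — more precisely, $G$ being totally reflexive and non-free over $Q$ forces some $\Ext^i_Q(k_Q,G)\neq 0$ for $i$ arbitrarily large, whereas $R\otimes_Q G$ free together with $\fd_Q R<\infty$ would bound the complexity of $G$. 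A cleaner route, and the one I would actually pursue, is to invoke the descent statement for totally reflexive modules along homomorphisms of finite flat dimension, cf.\ \cite[5.6(b)]{CH}: an $R$-module $N$ is totally reflexive over $R$ iff its restriction along a finite-flat-dimension map is totally reflexive downstairs, combined with the observation that freeness also descends along such maps (a module of finite flat dimension that becomes free after a faithfully-flat-like base change was already free). The key point to nail down is that "$R\otimes_Q G$ free $\Rightarrow$ $G$ free over $Q$" when $\fd_Q R<\infty$ and $\varphi$ is local; granting this, choosing $G$ non-free over $Q$ — possible exactly because $Q$ is non-regular — yields $R\otimes_Q G$ non-free over $R$, hence a non-trivial minimal totally acyclic complex $R\otimes_Q\mathbf{C}$ over $R$, completing the proof.
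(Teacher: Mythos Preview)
Your overall strategy---start with a non-trivial minimal totally acyclic complex $\mathbf{C}$ over $Q$, tensor with $R$, and argue that $\mathbf{C}\otimes_Q R$ stays totally acyclic, minimal, and non-trivial---is exactly the paper's approach. Two steps in your write-up, however, go astray.

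\textbf{Dual acyclicity.} You correctly identify $(R\otimes_Q\mathbf{C})^*\cong\Hom_Q(\mathbf{C},R)$ via adjunction, but then try to show this is acyclic by asserting that $R$ is ``a perfect (Cohen--Macaulay) $Q$-module'' and invoking ``Gorenstein duality on $Q$'' to convert Ext into Tor. Neither is warranted: the hypothesis is only $\fd_Q R<\infty$; $R$ need not even be finitely generated over $Q$, let alone perfect, and there is no general Ext--Tor duality with coefficients in such an $R$. The paper's route is far simpler and avoids any such assumption: since each $C_i$ is finitely generated and free over $Q$, one has a natural isomorphism of complexes
\[
\Hom_R(\mathbf{C}\otimes_Q R,\,R)\;\cong\;\Hom_Q(\mathbf{C},Q)\otimes_Q R\;=\;\mathbf{C}^*\otimes_Q R,
\]
and now the \emph{identical} Tor-vanishing argument (applied to the totally acyclic complex $\mathbf{C}^*$ and its syzygies) shows exactness. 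No duality, no perfection of $R$ needed.

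\textbf{Non-triviality.} You already proved what is needed and then talked yourself out of it. You observe that minimality is inherited because $\varphi$ is local, so the differentials of $\mathbf{C}\otimes_Q R$ land in $\fm_R(\mathbf{C}\otimes_Q R)$; and the complex is visibly nonzero since each $C_i\otimes_Q R\cong R^{\rank C_i}\neq 0$. By the paper's definition, minimal and nonzero \emph{is} non-trivial: a minimal acyclic complex has no contractible summand (a homotopy $1=s\partial+\partial s$ with $\im\partial\subseteq\fm_R$ forces everything to vanish). Your subsequent detour through descent of freeness, complexity bounds, and the reference to \cite{CH} is unnecessary; the ``key point to nail down'' about $R\otimes_Q G$ free $\Rightarrow G$ free is in fact just a restatement of the minimality observation you already made.
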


\begin{proof} Since $Q$ is Gorenstein and non-regular, it admits non-trivial minimal totally acyclic complexes;
let $\mathbf{C}=(C_i,\dell_i)_{i\in\ZZ}$ be such a complex, and consider the totally reflexive module $\Omega^j\mathbf{C}$ for some $j\in\ZZ$. If $\fd_QR=n$, we have that $\Tor^Q_i(\Omega^j\mathbf{C},R)$ vanishes for all $i>n$.  Letting $j$ vary, it follows that $\mathbf{C}\otimes_Q R$ is exact over $R$.  Furthermore, the isomorphism given by
\[
\Hom_R(\mathbf{C}\otimes_Q R,R)\cong\Hom_Q(\mathbf{C},Q)\otimes_Q R
\]
(cf.\ \cite[Proposition 2.3]{JM}) implies that the same argument shows that $\left(\mathbf{C}\otimes_Q R\right)^*=\Hom_R(\mathbf{C}\otimes_Q R,R)$ is also exact over $R$. Therefore $\mathbf{C}\otimes_Q R$ is a totally acyclic complex
over $R$, and it is non-trivial and minimal since $\mathbf{C}$ was. \end{proof}

We can now state and prove our main theorem.

\begin{theorem}\label{thm:main} Let $\varphi:Q\ra R$ be a Gorenstein homomorphism of local rings whose kernel is
contained in $\fm_Q^2$.  Suppose that there exists a Gorenstein local ring $P$ and homomorphism $\psi:P\ra
Q$ of finite flat dimension. Furthermore let $S=P/I$ be a lifting of $R$ to $P$ via $Q$ such that
\begin{enumerate} \item $0\neq I\subseteq\fm_P^2$ and \item $\grade_P(S,P)\geq\grade_P(S,Q)$. \end{enumerate} Then
there exist non-trivial totally acyclic complexes over $R$. \end{theorem}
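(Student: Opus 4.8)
The plan is to produce a non-regular Gorenstein local ring admitting a local homomorphism of finite flat dimension onto $R$, and then quote Lemma~\ref{lem:finflatdim}; the natural candidate is the lifting ring $S=P/I$, equipped with the homomorphism $\bar\psi\colon S=P/I\to Q/IQ\cong R$ induced by $\psi$. Note that $\ker\varphi=\Ann_Q R=IQ$ (as $R\cong Q\otimes_P S$), so this is well defined. I would first set $g:=\grade_Q R=\pd_Q R$, which is finite because $\varphi$ is perfect, and take a minimal $P$-free resolution $F_\bullet$ of $S$. Since $\Tor^P_i(Q,S)=0$ for $i>0$ and $\psi$ is local, $F_\bullet\otimes_P Q$ is a minimal $Q$-free resolution of $R\cong Q\otimes_P S$; hence $\pd_P S=g$, and by Hom--tensor adjunction $\Ext^i_P(S,Q)\cong\Ext^i_Q(R,Q)$ for all $i$, so $\grade_P(S,Q)=g$. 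Hypothesis~(2) then forces $g\le\grade_P(S,P)\le\pd_P S=g$, so $S$ is a perfect $P$-module; as $P$ is Cohen--Macaulay, $S$ is a Cohen--Macaulay ring.

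Next I would verify the two ring-theoretic properties of $S$ needed in the lemma. Non-regularity: by hypothesis~(1), $I\subseteq\fm_P^2$, so $\embdim S=\embdim P$; if $S$ were regular then $\dim S=\embdim S=\embdim P$, and since $\dim S\le\dim P\le\embdim P$ this makes $P$ regular, hence a domain, whereupon $0\ne I$ gives $\dim S=\dim(P/I)<\dim P=\dim S$, a contradiction. The Gorenstein property --- the crux --- I would obtain as follows. Since $P$ is Gorenstein and $S$ is a perfect quotient of grade $g$, the canonical module of $S$ is $\omega_S\cong\Ext^g_P(S,P)$. The complex $\Hom_P(F_\bullet,P)$ consists of finitely generated free modules and, by perfection, has cohomology only in the top degree $g$, namely $\omega_S$; applying $-\otimes_P Q$ and using right-exactness gives $\Ext^g_P(S,Q)\cong\omega_S\otimes_P Q$. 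Combined with $\Ext^g_P(S,Q)\cong\Ext^g_Q(R,Q)\cong R$ --- the last isomorphism because $\ker\varphi$ is a Gorenstein ideal of $Q$ --- this yields $\omega_S\otimes_P Q\cong R$. Tensoring over $Q$ with the residue field $k_Q$ of $Q$ and using $\ker\varphi\subseteq\fm_Q$, the right side becomes $k_Q$; and since $\psi$ is local, $k_Q$ is obtained from $k=P/\fm_P$ by extension of scalars, so $\mu_P(\omega_S)=\dim_{k_Q}(\omega_S\otimes_P k_Q)=1$. As $\omega_S$ is an $S$-module with $S/\fm_S\cong k$, this means $\mu_S(\omega_S)=1$: $S$ has Cohen--Macaulay type one, hence is Gorenstein.

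It remains to see that $\bar\psi$ has finite flat dimension: it is a local homomorphism, and base-changing a finite flat $P$-resolution of $Q$ along $P\to S$ (again using $\Tor^P_{>0}(Q,S)=0$) gives a finite flat $S$-resolution of $R\cong Q\otimes_P S$, so $\fd_S R<\infty$. Applying Lemma~\ref{lem:finflatdim} to the non-regular Gorenstein local ring $S$ and the homomorphism $\bar\psi$ of finite flat dimension then produces non-trivial totally acyclic complexes over $R$, which is the assertion.

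I expect the Gorenstein property of $S$ to be the main obstacle. The isomorphism $\Ext^g_Q(R,Q)\cong\omega_S\otimes_P Q$ genuinely depends on $S$ being a \emph{perfect} $P$-module --- precisely what hypothesis~(2) supplies --- so that the dual of the $P$-resolution has cohomology concentrated in a single degree; and the passage from $\omega_S\otimes_P Q\cong R$ to $\mu_S(\omega_S)=1$ must be done carefully, since the field extension $k\hookrightarrow k_Q$ induced by $\psi$ need not be an isomorphism.
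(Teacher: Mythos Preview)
Your proof is correct and follows essentially the same route as the paper: establish that $S$ is a non-regular Gorenstein local ring and that the induced map $S\to R$ has finite flat dimension, then invoke Lemma~\ref{lem:finflatdim}. The only cosmetic differences are that the paper deduces Gorensteinness of $S$ by noting the top free module in the minimal $P$-resolution $\mathbf{F}$ has rank one (since $\mathbf{F}\otimes_P Q$ resolves $R$), whereas you phrase the same computation as $\mu_S(\omega_S)=1$, and the paper obtains non-regularity directly from $\dim S=\dim P-\pd_P S<\dim P$ via Auslander--Buchsbaum rather than by your contradiction argument.
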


\begin{proof} Consider the natural projection $\varphi':P\ra S$, and the map $\psi':S\ra
S\otimes_P Q\cong R$ which acts by $s\mapsto s\otimes 1$.  Then we have the following commutative diagram of
local ring homomorphisms:
\begin{equation*}\tag{\dag}
\begin{split}
\xymatrixrowsep{2pc} \xymatrixcolsep{2pc}
\xymatrix{
& P\ar@{->}_{\varphi'}[dl]\ar@{->}^{\psi}[dr] & \\
S\ar@{->}_{\psi'}[dr] & &
Q\ar@{->}^{\varphi}[dl] \\
& R & }
\end{split}
\end{equation*}

By Lemma \ref{lem:finflatdim}, it suffices to show that $S$ is a non-regular Gorenstein ring and that $\fd\psi'<\infty$.

Let $\mathbf{F}$ be a minimal free resolution of $S$ over $P$.  Since $\Tor_i^P(S,Q)$ vanishes for each $i>0$, a
minimal free resolution of $R$ over $Q$ is given by $\mathbf{F}\otimes_PQ$, and therefore $\pd_P S=\pd_Q R<\infty$.
Also by the vanishing of $\Tor_i^P(S,Q)$ for each $i>0$, we get isomorphisms
\[
\Ext^i_P(S,Q)\cong\Ext^i_Q(R,Q)
\]
for all $i$ (cf.\ \cite[2.1(1)]{JM}).  Thus, $\grade_P(S,Q)=\grade_Q(R,Q)$. This and the assumptions of the theorem now give
\begin{align*}
\grade_P(S,P) &\geq \grade_P(S,Q)\\
&= \grade_Q(R,Q)\\
&= \pd_Q R\\
&= \pd_P S
\end{align*}
which implies that $\grade_P(S,P)=\pd_P S$, and so $\varphi'$ is Cohen-Macaulay.

To show that $S$ is a Gorenstein ring it remains to prove that $\Ext_P^{\pd_P S}\left(S,P\right)\cong S$.
However, since $S$ is a perfect $P$-module, it is sufficient to show that the rank of the last nonzero free module in
the minimal free resolution $\mathbf{F}$ of $S$ over $P$ is one. But this follows from the fact that $\mathbf{F}\otimes_P Q$
is a minimal free resolution of $R$ over $Q$ whose last nonzero free module has rank one.

Next we justify that $S$ is non-regular.  Let $I=\ker\varphi'$, which by assumption is contained in $\fm_P^2$.  We
have $\fm_S/\fm_S^2= \fm_P/(\fm_P^2+I)=\fm_P/\fm_P^2$.  Since the Cohen-Macaulayness of $S$ and $P$, along with the Auslander-Buchsbaum formula, imply that $\dim S<\dim P$, we therefore have
\[
\mu_S(\fm_S)=\mu_P(\fm_P)\ge \dim P >\dim S
\]
and so $S$ is non-regular.

Finally we show that the map $\psi'$ has finite flat dimension. Let $M$ be any $S$-module. By the vanishing
of $\Tor_i^P(S,Q)$ for each $i>0$, we have isomorphisms $\Tor_i^S(R,M)\cong\Tor_i^P(Q,M)$ for all $i$.
Since $Q$ has finite flat dimension as a $P$-module, this homology eventually vanishes.  The finite flat dimension of $R$ over $S$ follows.
\end{proof}

\begin{remark} Theorem~\ref{thm:main} may be regarded as a sort of `ascent' analogue of \cite[Theorem 3.1(2)]{Vel}.  Both results use a similar factorization
involving Gorenstein homomorphisms, whereas the point of the theorem from \cite{Vel} is that the hypotheses placed on $\varphi'$ descend to $\varphi$.
\end{remark}

We list as corollaries several cases in which Theorem \ref{thm:main} applies to establish the existence of non-trivial totally acyclic complexes over $R$.  We shall begin by showing that our result recovers the class of local rings which have embedded deformations.  Our proof of this result, as well as that of Corollary \ref{cor:grade3} below, use standard constructions of ring homomorphisms, as, for example,  seen in \cite{Vel}.

\begin{proof}[Proof of Theorem \ref{thm:embdef}] Let $\varphi:Q\ra Q/(x_1,\dots,x_c)\cong R$ define an embedded
deformation for $R$, so that $x_1,\dots,x_c$ is a $Q$-regular sequence contained in $\fm_Q^2$. Choose a minimal
system of generators $z_1,\dots,z_e$ of $\fm_Q$ and, for $1\le i \le c$, write $x_i=\sum_{j=1}^e r_{ij}z_j$
where each $r_{ij}\in \fm_Q$.

Now let $\rho_{ij}$, $\xi_{j}$ for $1\le i\le c$ and $1\le j\le e$ be indeterminates over $\mathbb Z$.  If
$p=\ch Q/\fm_Q$, set \[ P=\mathbb Z[\rho_{ij},\xi_{j}]_{(p,\rho_{ij},\xi_{j})} \] and consider the local
ring homomorphism $\psi:P\ra Q$ given by $\rho_{ij}\mapsto r_{ij}$, and $\xi_{j}\mapsto z_j$ for all $i$ and
$j$.  Thus, we define $\chi_i=\sum_{j=1}^e\rho_{ij}\xi_{j}$ for each $1\leq i\leq c$ and let \[
S=P/\left(\chi_1,\ldots,\chi_c\right). \] It now suffices to show that $S$ satisfies the hypotheses of
Theorem \ref{thm:main}.

First of all, notice that by way of construction, $S$ is a lifting of $R$ to $P$ via $Q$.  To see this more
explicitly, notice that \[ P/(\chi_1,\ldots,\chi_c)\otimes_P Q\xra{\cong} P\otimes_P Q/(x_1,\ldots,x_c) \]
as $P$-modules via the mapping given by $\overline{a}\otimes b\mapsto 1\otimes \overline{\psi(a)b}$, whence
$S\otimes_P Q\cong R$.  Furthermore, as $x_1,\ldots,x_c$ is $Q$-regular and $\chi_1,\ldots,\chi_c$ is
$P$-regular, if $\mathbf{F}$ is a $P$-free resolution of $S$, then $\mathbf{F}\otimes_P Q$ yields a $Q$-free resolution of $R$.  This implies Tor-independence.

The required grade inequality is actually a grade equality in this case,
\[
\grade_P(S,P)= c =\grade_P(S,Q).
\]
The remaining hypotheses are obvious, and the result follows. \end{proof}

\begin{corollary}\label{cor:grade3} Suppose that $\varphi:Q\ra R$ is a Gorenstein homomorphism of grade 3.  Then there exist
non-trivial totally acyclic complexes over $R$. \end{corollary}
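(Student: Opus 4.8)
The plan is to produce, from the grade-3 Gorenstein homomorphism $\varphi\colon Q\to R$, a factorization through a Gorenstein local ring $P$ and a lifting $S=P/I$ satisfying the hypotheses of Theorem~\ref{thm:main}, and then apply that theorem. The guiding principle, as in the proof of Theorem~\ref{thm:embdef}, is to realize $R$ "generically": pick a minimal $Q$-free resolution of $R$, which by the Buchsbaum--Eisenbud structure theorem for grade-3 Gorenstein ideals is governed by an alternating matrix of Pfaffians, write each entry of that matrix in terms of a minimal generating set $z_1,\dots,z_e$ of $\fm_Q$, introduce indeterminates $\rho$ for the resulting coefficients together with indeterminates $\xi_1,\dots,\xi_e$ for the $z_j$, and let $P=\ZZ[\rho,\xi]_{(p,\rho,\xi)}$ with $p=\ch(Q/\fm_Q)$. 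The map $\psi\colon P\to Q$ sends $\rho\mapsto(\text{coefficients})$ and $\xi_j\mapsto z_j$; since $P$ is a localization of a polynomial ring over $\ZZ$ it is regular, hence Gorenstein, and $\psi$ is a local homomorphism of finite flat dimension (indeed, a map from a regular local ring has finite flat dimension automatically). Define $S=P/I$, where $I$ is the ideal of $P$ generated by the submaximal Pfaffians of the generic alternating matrix assembled from the $\rho,\xi$; by construction $\psi$ carries these generic Pfaffians to the Pfaffians defining $\ker\varphi$, so $S\otimes_P Q\cong R$.

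The first real step is to check that $S$ is a lifting of $R$ to $P$ via $Q$, i.e.\ $\Tor_i^P(S,Q)=0$ for $i>0$. Here I would invoke the fact that the Buchsbaum--Eisenbud complex attached to a generic submaximal-Pfaffian ideal of grade $3$ is acyclic and remains acyclic under specialization precisely when grade is preserved; since $\varphi$ has grade $3$, the specialization $\psi$ preserves grade, so the generic resolution $\mathbf F$ of $S$ over $P$ tensors down to a resolution $\mathbf F\otimes_P Q$ of $R$ over $Q$, giving the Tor-vanishing. The second step is the grade inequality $\grade_P(S,P)\ge\grade_P(S,Q)$. As in Theorem~\ref{thm:embdef} this should in fact be an equality of the form $\grade_P(S,P)=3=\grade_P(S,Q)$: the right-hand side equals $\grade_Q(R,Q)=3$ by the Tor-independence isomorphism $\Ext_P^i(S,Q)\cong\Ext_Q^i(R,Q)$ established inside the proof of Theorem~\ref{thm:main}, and the left-hand side is $3$ because the generic Pfaffian ideal $I$ in the regular ring $P$ has grade equal to its expected value $3$ (this is the content of the genericity of the construction). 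The remaining hypothesis $0\neq I\subseteq\fm_P^2$ is immediate: the Pfaffians are quadratic in the generic entries, hence lie in $\fm_P^2$, and $I\neq 0$ since $P$ is a domain and the Pfaffians are nonzero polynomials. With all hypotheses of Theorem~\ref{thm:main} verified, the conclusion follows.

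The main obstacle I anticipate is the genericity/acyclicity bookkeeping in the lifting step: one must be careful that the generic alternating matrix built over $P$ is set up so that (a) its submaximal Pfaffians specialize under $\psi$ exactly to a minimal generating set of $\ker\varphi$, and (b) the grade of the generic Pfaffian ideal in $P$ is the full expected value $3$, so that the Buchsbaum--Eisenbud acyclicity criterion applies both over $P$ and, after specialization, over $Q$. This is the grade-3 analogue of the observation in Theorem~\ref{thm:embdef} that a regular sequence lifts to a generic regular sequence; the difference is that here one leans on the structure theorem for grade-3 Gorenstein ideals rather than on the Koszul complex. Once the generic model is correctly specified, the verification of the Theorem~\ref{thm:main} hypotheses is routine and parallels the embedded-deformation case almost verbatim.
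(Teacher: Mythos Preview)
Your overall strategy matches the paper's: invoke the Buchsbaum--Eisenbud structure theorem for grade-$3$ Gorenstein ideals, build a generic model over a regular local ring $P$, and verify the hypotheses of Theorem~\ref{thm:main}. The paper, however, chooses $P$ more simply: it sets $P=\ZZ[t_{ij}]_{(p,t_{ij})}$ with one indeterminate $t_{ij}$ for each entry of the alternating matrix $\alpha=(a_{ij})$, defines $\psi(t_{ij})=a_{ij}$, and lets $\tau=(t_{ij})$ be the literally generic alternating matrix over $P$. The Buchsbaum--Eisenbud theorem then directly yields a minimal resolution $\mathbf G$ of $S=\coker\sigma^*$ over $P$ with $\mathbf G\otimes_PQ\cong\mathbf F$, and the remaining hypotheses of Theorem~\ref{thm:main} follow.

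Your construction instead mimics the embedded-deformation proof by writing each $a_{ij}=\sum_\ell r_{ij\ell}z_\ell$ and introducing indeterminates $\rho_{ij\ell},\xi_\ell$ for both the coefficients and the generators of $\fm_Q$. This extra layer creates a genuine gap: minimality of the Buchsbaum--Eisenbud resolution only gives $a_{ij}\in\fm_Q$, not $a_{ij}\in\fm_Q^2$, so in general some $r_{ij\ell}$ are units in $Q$. Since you place $\rho_{ij\ell}\in\fm_P$, the map $\psi$ then sends a nonunit to a unit and is not a local homomorphism; consequently $\psi'\colon S\to R$ is not local either, and Lemma~\ref{lem:finflatdim} (hence Theorem~\ref{thm:main}) does not apply as stated. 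The two-layer device worked in the proof of Theorem~\ref{thm:embdef} precisely because the regular sequence there lay in $\fm_Q^2$, forcing all coefficients into $\fm_Q$; no such containment holds for the entries of $\alpha$ in the grade-$3$ Pfaffian setting. The paper's one-layer choice of $P$ sidesteps this issue entirely and also makes the grade-$3$ claim for $I$ immediate from the classical generic Pfaffian result, whereas in your bilinear-entry setup that claim requires separate justification. (A minor aside: the submaximal Pfaffians have degree $(d-1)/2$ in the matrix entries, not degree~$2$.)
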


\begin{proof}  By the Buchsbaum-Eisenbud structure theorem (cf.\ \cite[Theorem 3.4.1(b)]{BH}), there exists a
(deleted) minimal free resolution of $R$ over $Q$ given by
\[
\mathbf{F}:\qquad 0\ra Q\xra{\beta}Q^d\xra{\alpha}Q^d\xra{\beta^*}Q\ra 0
\]
 with $d$ odd, where $\alpha=(a_{ij})$ is skew-symmetric
and $\beta=(b_j)$, such that $b_j$ is determined by the Pfaffian of the matrix obtained by the deletion of
the $j^{th}$ row and column of $\alpha$. Notice that $\beta$ is non-trivial since $d$ is assumed to be odd.

Through a similar process as that in the previous proof, we define a regular local ring
\[
P=\ZZ[T]_{(p;T)}
\]
where $T=\{t_{ij}\var 1\leq i<j\leq d\}$ is a set of indeterminates over $\ZZ$ and $p=\ch Q/\fm_Q$, and a
local homomorphism $\psi: P\ra Q$ which acts by $t_{ij}\mapsto a_{ij}$ for each $i$ and $j$.

Now consider the $d\times d$ matrix over $P$ given by $\tau=(t_{ij})$ where $t_{ii}=0$ and $t_{ji}=t_{ij}$
for all $i<j$.  Let $\sigma=(s_{j})$ be the $d\times 1$ matrix over $P$ such that $s_j$ is defined by the
Pfaffian of the matrix obtained by deleting the $j^{th}$ row and column of $\tau$.  If $S$ is the $P$-module
defined by the cokernel of ${\sigma}^*$, the Buchsbaum-Eisenbud structure theorem implies that
\[
\mathbf{G}:\qquad 0\ra P\xra{\sigma}P^d\xra{\tau}P^d\xra{\sigma^*}P\ra 0
\]
is a (deleted) minimal free resolution of $S$ over
$P$.  With these defined, notice that $\mathbf{G}\otimes_P Q\cong\mathbf{F}$, which implies that $R$ lifts to $P$ via $Q$.  We can now use Theorem \ref{thm:main} to obtain the desired result. \end{proof}

The next result addresses the case that $Q$ is a Cohen-Macaulay ring.  Its proof uses a corollary to Robert's ``new intersection theorem'' (cf.\ \cite{Rob}), which we state as a lemma.

\begin{lemma}\label{lem:intersection} Let $R$ be a local ring, and suppose that $M$ and $N$ are finitely generated
$R$-modules of finite projective dimension over $R$.  If $\Tor_i^R(M,N)=0$ for all $i>0$ and  $M\otimes_R N$
is Cohen-Macaulay, then both $M$ and $N$ are Cohen-Macaulay. \end{lemma}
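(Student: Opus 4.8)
The plan is to combine three standard facts: the Auslander--Buchsbaum equality, the depth formula for Tor--independent modules, and the form of the ``intersection theorem'' that follows from Roberts' new intersection theorem. Set $d=\depth R$, and observe that we may assume $M$ and $N$ are nonzero, since otherwise the statement is vacuous; then $M\otimes_R N$ is nonzero as well, because $(M\otimes_R N)\otimes_R k\cong(M/\fm M)\otimes_k(N/\fm N)\neq 0$ and hence $M\otimes_R N\neq 0$ by Nakayama.

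First, since $\pd_R M<\infty$ and $\pd_R N<\infty$, the Auslander--Buchsbaum formula gives $\depth_R M=d-\pd_R M$ and $\depth_R N=d-\pd_R N$. Second, because $\Tor_i^R(M,N)=0$ for all $i>0$ and $\pd_R M<\infty$, the depth formula applies and yields
\[
\depth_R(M\otimes_R N)=\depth_R M+\depth_R N-d=d-\pd_R M-\pd_R N .
\]
Third, the hypothesis that $M\otimes_R N$ is Cohen--Macaulay lets us replace the left-hand side by $\dim_R(M\otimes_R N)$, so that $\dim_R(M\otimes_R N)=d-\pd_R M-\pd_R N$.

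The key step is then to feed this into the intersection theorem in the form $\dim_R N\leq\pd_R M+\dim_R(M\otimes_R N)$, which is the corollary of \cite{Rob} referred to above. Substituting the value just obtained gives
\[
\dim_R N\;\leq\;\pd_R M+\bigl(d-\pd_R M-\pd_R N\bigr)\;=\;d-\pd_R N\;=\;\depth_R N ,
\]
and since $\depth_R N\leq\dim_R N$ always, we conclude $\dim_R N=\depth_R N$; that is, $N$ is Cohen--Macaulay. Finally, the hypotheses are symmetric in $M$ and $N$---the vanishing of $\Tor_i^R(M,N)$ for $i>0$ is a symmetric condition and both modules have finite projective dimension---so running the identical argument with the roles of $M$ and $N$ interchanged shows that $M$ is Cohen--Macaulay too.

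I do not anticipate a genuine obstacle: the whole point is that the non-formal inequality $\dim_R N\leq\depth_R N$ is exactly what the new intersection theorem supplies, once the depth formula has been used to evaluate $\depth_R(M\otimes_R N)$. The only places calling for a little care are checking the hypotheses under which the depth formula holds (finite projective dimension of one factor together with the vanishing of all higher Tor) and disposing of the degenerate case where $M$ or $N$ vanishes.
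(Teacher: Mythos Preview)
Your proof is correct and follows essentially the same route as the paper's: both combine the intersection inequality $\dim_R N\leq\pd_R M+\dim_R(M\otimes_R N)$ with Auslander--Buchsbaum to force $\dim_R N\leq\depth_R N$, then appeal to symmetry. The only cosmetic difference is that you invoke the depth formula directly, whereas the paper computes $\depth_R(M\otimes_R N)$ via the equality $\pd_R(M\otimes_R N)=\pd_R M+\pd_R N$ together with Auslander--Buchsbaum for $M\otimes_R N$; these are equivalent.
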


\begin{proof} Recall that the vanishing of $\Tor_i^R(M,N)$ for $i>0$ implies that
\[
\pd_R(M\otimes_R N)=\pd_R M+\pd_R N.
\]
Furthermore, by using \cite[Corollary 9.4.6]{BH} along with the Auslander-Buchsbaum
formula, we obtain the following:
\begin{align*}
\dim_R N &\leq \pd_R M +\dim_R(M\otimes_R N)\\
&=\pd_R(M\otimes_R N) -\pd_R N +\depth_R(M\otimes_R N)\\
&=\depth_R R -\pd_R N\\
&=\depth_R N
\end{align*}
The same proof shows that $M$ is also Cohen-Macaulay.
\end{proof}

\begin{corollary} \label{cor:cm}
Let $\varphi:Q\ra R$ be a Gorenstein homomorphism of local rings whose kernel is contained
in $\fm_Q^2$. Suppose that there exists a Gorenstein local ring $P$ and Cohen-Macaulay homomorphism $\psi:P\ra Q$
whose kernel is contained in $\fm_P^2$. If $R$ lifts to $P$ via $Q$, then there exist non-trivial
totally acyclic complexes over $R$. \end{corollary}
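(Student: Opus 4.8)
The plan is to reduce to Theorem~\ref{thm:main} by verifying its hypotheses, the only non-formal ingredient being Lemma~\ref{lem:intersection}. The easy items come first: $P$ is Gorenstein by hypothesis, and since $\psi$ is Cohen-Macaulay the $P$-module $Q$ is perfect, so in particular $\fd_P Q\le\pd_P Q<\infty$. It thus remains to exhibit a lifting $S=P/I$ of $R$ to $P$ via $Q$ satisfying $0\ne I\subseteq\fm_P^2$ and $\grade_P(S,P)\ge\grade_P(S,Q)$.

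Let $N$ be a lifting of $R$ to $P$ via $Q$, which exists by hypothesis. Since $\psi$ and $\varphi$ are surjective local homomorphisms, $\mu_P(N)=\mu_Q(N\otimes_P Q)=\mu_Q(R)=1$, so $N\cong S:=P/I$ with $I=\Ann_P N$. Put $J=\ker\psi$. From $R\cong Q\otimes_P S\cong P/(I+J)$ the ideal $I+J$ is the preimage under $\psi$ of $\ker\varphi$; since $\ker\varphi\subseteq\fm_Q^2$ and the preimage of $\fm_Q^2$ equals $\fm_P^2+J\subseteq\fm_P^2$, we obtain $I\subseteq\fm_P^2$. For the nonvanishing of $I$ I would use $\ker\varphi\ne 0$: then $(I+J)/J=\ker\varphi\ne 0$ forces $I\not\subseteq J$, so $I\ne 0$. (If $\ker\varphi=0$ then $\varphi$ is an isomorphism; this degenerate case lies outside the mechanism of Theorem~\ref{thm:main} and I would regard it as tacitly excluded.)

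The remaining --- and principal --- point is the grade inequality, and here Lemma~\ref{lem:intersection} is the crucial tool; the routine verifications above are exactly what let us feed it the right input. For the $P$-modules $Q$ and $S$: both have finite projective dimension over $P$ ($\pd_P Q<\infty$ since $\psi$ is Cohen-Macaulay, and $\pd_P S=\pd_Q R<\infty$ exactly as in the proof of Theorem~\ref{thm:main}); $\Tor_i^P(Q,S)=\Tor_i^P(Q,N)=0$ for $i>0$ by the definition of a lifting; and $Q\otimes_P S\cong R$ is Cohen-Macaulay, being the image of the Cohen-Macaulay ring $Q$ --- itself the image of the Cohen-Macaulay ring $P$ under the Cohen-Macaulay map $\psi$ --- under the map $\varphi$, which is Cohen-Macaulay because it is Gorenstein. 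Lemma~\ref{lem:intersection} then gives that $S$ is a Cohen-Macaulay, hence perfect, $P$-module, so $\grade_P(S,P)=\pd_P S$. On the other side, the Tor-independence of $S$ and $Q$ over $P$ yields $\Ext_P^i(S,Q)\cong\Ext_Q^i(R,Q)$ for all $i$, so $\grade_P(S,Q)=\grade_Q(R,Q)=\pd_Q R$, the last equality because $R$ is a perfect $Q$-module ($\varphi$ being Cohen-Macaulay). Since $\pd_Q R=\pd_P S$, the desired inequality holds --- in fact with equality --- and Theorem~\ref{thm:main} applies to give the conclusion. The single genuinely non-formal step is thus the appeal to the new intersection theorem via Lemma~\ref{lem:intersection}; everything else is bookkeeping with the factorization from the proof of Theorem~\ref{thm:main}.
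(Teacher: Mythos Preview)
Your argument is correct and follows the paper's approach almost verbatim: show the lifting is cyclic, check $I\subseteq\fm_P^2$, and use Lemma~\ref{lem:intersection} to obtain that $S$ is perfect over $P$, whence the grade condition reduces to $\pd_P S=\pd_Q R=\grade_Q(R,Q)=\grade_P(S,Q)$. The only difference is cosmetic: the paper first disposes of the case where $P$ is non-regular by applying Lemma~\ref{lem:finflatdim} directly to the composite $P\to R$, and only invokes Theorem~\ref{thm:main} when $P$ is regular, whereas you apply Theorem~\ref{thm:main} uniformly; your handling of $I\neq 0$ (and the acknowledged degenerate case $\ker\varphi=0$) is in fact slightly more careful than the paper's.
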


\begin{proof} If $P$ is non-regular, then by virtue of the fact that $\pd_P Q$ and $\pd_Q R$ are both
finite, we have that $\pd_P R$ is finite as well.  Thus Lemma \ref{lem:finflatdim} shows that non-trivial totally
acyclic complexes exist over $R$.  The rest of the proof addresses the case that $P$ is regular.

Let $S$ be  a lifting of $R$ to $P$. First, we want to show that $S$ is a quotient ring of $P$, equivalently
a cyclic $P$-module. We have that the induced map $P/\fm_P\ra Q/\fm_Q$ is an isomorphism, and therefore
\begin{align*}
R/\fm_Q R &\cong R\otimes_Q Q/\fm_Q\\
&\cong (S\otimes_P Q)\otimes_Q Q/\fm_Q\\
&\cong S\otimes_P Q/\fm_Q\\
&\cong S\otimes_P P/\fm_P\\
&\cong S/\fm_P S
\end{align*}
which implies that $R/\fm_Q R\cong S/\fm_P S$ as vector spaces over $Q/\fm_Q$.  It follows that
\[
\mu_P(S)=\dim_{P/\fm_P}S/\fm_P S=\dim_{Q/\fm_Q}R/\fm_Q R=1.
\]

Now consider the natural projection $\varphi':P\ra S$, and the natural local ring homomorphism $\psi':S\ra S\otimes_P Q\cong R$
which acts by $s\mapsto s\otimes 1$.  Since $\psi$ Cohen-Macaulay implies $\fd_P Q<\infty$, it is sufficient to show that $S$ satisfies the hypotheses of Theorem \ref{thm:main}. To this end, we prove that $\grade_P(S,P)\geq\grade_P(S,Q)$ and the kernel of $\varphi'$ is contained in $\fm_P^2$.

In order to show the latter, let $x\in\ker\varphi'$.  Then $\psi(x)\in\ker\varphi\subseteq\fm_Q^2$. Since the induced map $\fm_P/\fm_P^2\ra\fm_Q/\fm_Q^2$ is injective, it follows that $x\in\fm_P^2$.  Now to show the former, note that $Q$ is Cohen-Macaulay since it is perfect module over a Cohen-Macaulay ring. This fact, in turn,
implies the same property for $R$.  Recall that Lemma \ref{lem:intersection} shows that $S$ must be
Cohen-Macaulay as well.  Since $\pd_P S<\infty$, it follows that $\varphi'$ is Cohen-Macaulay.  This fact and the lifting condition imply the following equalities
\[
\grade_Q(R,Q)=\pd_Q R=\pd_P S=\grade_P(S,P).
\]
Recalling that vanishing of $\Tor_i^R(S,Q)$ for $i>0$ implies that $\grade_P(S,Q)=\grade_Q(R,Q)$ (see the proof of Theorem \ref{thm:main}), the desired grade condition is satisfied.  We can now apply Theorem \ref{thm:main} to obtain the result.
\end{proof}

In the previous corollary, the assumption that $\ker\psi\subseteq\fm_P^2$ is essential in obtaining totally
reflexive $R$-modules which are \emph{non-trivial}.  This fact is illustrated through the following
example.

\begin{example} Let $k$ be a field and consider the local rings defined by:
\[
P=k\dbl x,y\dbr\quad Q=k\dbl x\dbr\quad R=k\dbl x\dbr/(x^2)
\]
Furthermore, let $\psi:P\ra k\dbl x,y\dbr/(y-x^2)\cong Q$ and
$\varphi:Q\ra R$ be the natural projection maps.  Notice that $\varphi\circ\psi:P\ra R$ can be alternately
factored to obtain the following commutative diagram of local rings
\[
\xymatrixrowsep{2pc}
\xymatrixcolsep{2pc}
\xymatrix{ & P\ar@{->}_{\varphi'}[dl]\ar@{->}^{\psi}[dr]\\
S\ar@{->}_{\psi'}[dr] & & Q\ar@{->}^{\varphi}[dl]\\
& R }
\]
where $S=k\dbl x,y\dbr/(y)\cong k\dbl x\dbr$ and $\psi'$ and $\varphi'$
are the natural projection maps.  Though $S$ is clearly a lifting of $R$ to $P$ via $Q$, $R$ does not fit
the criteria for Theorem \ref{thm:main} as $\ker\psi\nsubseteq\fm_P^2$.  The consequence lies in the fact that
$Q$ is regular, and thus all of its totally reflexive modules are in fact free.  The induced $R$-modules
will therefore be free as well. \end{example}

\begin{corollary} Let $\varphi:Q\ra R$ be a Gorenstein homomorphism of local rings whose kernel is contained
in $\fm_Q^2$. Suppose that $P$ is a Gorenstein local ring and $\psi:P\ra Q$ a local homomorphism of finite
flat dimension such that \begin{enumerate} \item the induced map $\fm_P/\fm_P^2\ra\fm_Q/\fm_Q^2$ is
injective and \item the induced map $P/\fm_P\ra Q/\fm_Q$ is bijective. \end{enumerate} If there exists a
Cohen-Macaulay lifting of $R$ to $P$ via $Q$, then there exist non-trivial totally acyclic complexes over
$R$. \end{corollary}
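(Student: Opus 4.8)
The plan is to verify that the data fits the hypotheses of Theorem~\ref{thm:main} and then invoke it directly. Since $\varphi$ is a Gorenstein homomorphism it is surjective, and $\pd_Q R=\grade_Q R<\infty$. Write $S$ for the given Cohen--Macaulay lifting of $R$ to $P$ via $Q$, so $R\cong S\otimes_P Q$ and $\Tor_i^P(Q,S)=0$ for all $i>0$. I would first show that $S$ is necessarily a \emph{cyclic} $P$-module $P/I$ with $0\neq I\subseteq\fm_P^2$ and $\grade_P(S,P)\geq\grade_P(S,Q)$; granting this, the factorization $P\xrightarrow{\varphi'}S\xrightarrow{\psi'}R$ --- with $\varphi'$ the canonical projection and $\psi'(s)=s\otimes 1$ --- together with the standing hypotheses that $\psi$ has finite flat dimension and that $\varphi$ is Gorenstein with $\ker\varphi\subseteq\fm_Q^2$, is precisely the setup of Theorem~\ref{thm:main}, and the conclusion follows.

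To see that $S$ is cyclic I would repeat the argument from the proof of Corollary~\ref{cor:cm}. Since $P/\fm_P\to Q/\fm_Q$ is bijective by (2), the $P$-module $Q/\fm_Q$ is isomorphic to $P/\fm_P$, so
\[
R/\fm_Q R\cong R\otimes_Q(Q/\fm_Q)\cong(S\otimes_P Q)\otimes_Q(Q/\fm_Q)\cong S\otimes_P(Q/\fm_Q)\cong S/\fm_P S ,
\]
whence $\mu_P(S)=\dim_{P/\fm_P}S/\fm_P S=\dim_{Q/\fm_Q}R/\fm_Q R=\mu_Q(R)=1$, the last equality because $\varphi$ is surjective. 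Thus $S=P/I$ with $I\subseteq\fm_P$ and $\varphi'$ the projection. For the inclusion $I\subseteq\fm_P^2$: given $x\in I$, commutativity of the triangle gives $\psi(x)\in\ker\varphi\subseteq\fm_Q^2$, and injectivity of $\fm_P/\fm_P^2\to\fm_Q/\fm_Q^2$ from (1) then forces $x\in\fm_P^2$.

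The grade inequality is where the Cohen--Macaulayness of the lifting enters. First, $\pd_P S<\infty$: applying $-\otimes_P Q$ to a minimal $P$-free resolution $\mathbf F$ of $S$ yields a minimal $Q$-free resolution $\mathbf F\otimes_P Q$ of $R$ (exactness from the vanishing of $\Tor_i^P(Q,S)$, minimality since $\psi(\fm_P)\subseteq\fm_Q$), and as $\pd_Q R<\infty$ this forces $\mathbf F$ to be finite, with $\pd_P S=\pd_Q R$. The same Tor-independence yields $\Ext_P^i(S,Q)\cong\Ext_Q^i(R,Q)$, so $\grade_P(S,Q)=\grade_Q(R,Q)=\pd_Q R$, the final step because $\varphi$ is perfect. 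On the other hand $S$ is Cohen--Macaulay of finite projective dimension over the Cohen--Macaulay ring $P$, hence a perfect $P$-module, so $\grade_P(S,P)=\pd_P S$. Chaining these,
\[
\grade_P(S,P)=\pd_P S=\pd_Q R=\grade_P(S,Q),
\]
which is hypothesis (2) of Theorem~\ref{thm:main}, in fact with equality. Finally $I\neq 0$: if $I=0$ then $S=P$ and $R\cong Q$, a degenerate case which, when $P$ is non-regular, is already covered by Lemma~\ref{lem:finflatdim} applied to $\psi$ (note $\fd_P Q<\infty$), and which otherwise lies outside the intended scope. With $0\neq I\subseteq\fm_P^2$ and the grade condition in place, Theorem~\ref{thm:main} applies and produces non-trivial totally acyclic complexes over $R$.

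The one substantive point --- and the sole place where (1) and (2) are used --- is promoting the given Cohen--Macaulay lifting to a \emph{cyclic} lifting with kernel inside $\fm_P^2$. After that the grade inequality is essentially automatic: Cohen--Macaulayness of $S$ together with finiteness of $\pd_P S$ forces $S$ to be perfect over $P$, and the rest is change-of-rings bookkeeping against the commutative triangle. I expect the main friction to be the honest treatment of the boundary case $I=0$ (equivalently, $\varphi$ an isomorphism), which --- as already implicit in Theorem~\ref{thm:main} and Corollary~\ref{cor:cm} --- must be excluded by the ambient conventions or handled separately via Lemma~\ref{lem:finflatdim}.
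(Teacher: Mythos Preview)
Your proposal is correct and follows essentially the same approach as the paper: the paper's proof simply notes that the Cohen--Macaulay hypothesis on $S$ together with the finiteness of $\pd_P S$ (from the lifting) makes $S$ perfect over $P$, and then refers the reader to the proof of Corollary~\ref{cor:cm} for the remaining verifications of the hypotheses of Theorem~\ref{thm:main}---which is precisely what you have spelled out in detail, including the cyclicity of $S$ via hypothesis~(2), the containment $I\subseteq\fm_P^2$ via hypothesis~(1), and the grade equality via change-of-rings. Your handling of the boundary case $I=0$ is more explicit than the paper's, which tacitly assumes it away.
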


\begin{proof}
Let $S$ be such a lifting of $R$ to $P$ via $Q$.  Since $S$ is Cohen-Macaulay and the lifting of $R$ implies that $\pd_P S$ is finite, we have that $S$ is a perfect $P$-module.  Using this fact, it is easy to follow the same steps as in the proof of Corollary \ref{cor:cm} to verify that $S$ satisfies the hypotheses necessary for the application of Theorem \ref{thm:main}.
\end{proof}

\section{Examples}

In this section we turn our attention to examples of rings which admit totally acyclic complexes by virtue of Theorem \ref{thm:main}. In fact, we are able to construct such rings with associated Gorenstein homomorphisms of arbitrary grade, which furthermore do not have embedded deformations.

\begin{construction} Let $k$ be a field, and define $P=k[X_1,\ldots,X_n,Y_1,\ldots,Y_m]_\fm$ where $\fm=(X_1,\ldots,X_n,Y_1,\ldots,Y_m)$ is the homogeneous maximal ideal over the polynomial ring $k[X_1,\ldots,X_n,Y_1,\ldots,Y_m]$.  Furthermore, let $\fm_X=(X_1,\ldots,X_n)$ and $\fm_Y=(Y_1,\ldots,Y_m)$ denote the homogeneous maximal ideals over the polynomial rings $k[X_1,\ldots,X_n]$ and $k[Y_1,\ldots,Y_m]$, respectively.  Now let
\[
\begin{array}{ll}
f_i\in P_X:=k[X_1,\ldots,X_n]_{\fm_X} & 1\leq i\leq r\\[0.05in]
g_j\in P_Y:=k[Y_1,\ldots,Y_m]_{\fm_Y} & 1\leq j\leq s \end{array}
\]
each be contained in the square of the respective maximal ideals.  If $(f_1,\ldots,f_r)P$ is a Gorenstein ideal of $P$ and
$(g_1,\ldots,g_s)P$ is a perfect ideal of $P$, then
\[
R=P/(f_1,\ldots,f_r,g_1,\ldots,g_s)P
\]
admits non-trivial totally acyclic complexes. Moreover, if $(g_1,\ldots,g_s)P$ is chosen
to be a non-Gorenstein ideal $P$, then $R$ is a non-Gorenstein ring.
\end{construction}

To justify these claims, let $S=P/(f_1,\ldots,f_r)P$ and $Q=P/(g_1,\ldots,g_s)P$, and notice that $R\cong S\otimes_P Q$.  We need to show that $S$ and $Q$ are Tor-independent $P$-modules, that $\grade_P(S,P)\geq\grade_P(S,Q)$, and that the projection $Q\to R$ is a Gorenstein homomorphism.  These facts are illustrated below.  In order to make notation more concise, we let $(\mathbf{f})$ and $(\mathbf{g})$ denote the ideals $(f_1,\ldots,f_r)P_X$ and $(g_1,\ldots,g_s)P_Y$, respectively.  Furthermore, unless otherwise stated, all tensor products are assumed to be taken over $k$.

First we show that $\Tor_i^R(S,Q)$ vanishes for positive $i$.  Take $\mathbf{F}\to P_X/(\mathbf{f})\to 0$ and $\mathbf{G}\to P_Y/(\mathbf{g})\to 0$ to be a free resolutions over $P_X$ and $P_Y$, respectively.  Then $(\mathbf{F}\otimes P_Y)_{\tilde\fm}$ and $(P_X\otimes \mathbf{G})_{\tilde\fm}$ are free resolutions of $S$ and $Q$, respectively, over $P$, where we define $\tilde\fm=\fm_X\otimes P_Y+P_X\otimes \fm_Y$.  To see that $S$ and $Q$ are Tor-independent over $P$, notice that
\[
\Tor_i^P(S,Q)=\HH_i\left((\mathbf{F}\otimes P_Y)_{\tilde\fm}\otimes_P(P_X\otimes \mathbf{G})_{\tilde\fm}\right)\cong\HH_i(\mathbf{F}\otimes \mathbf{G})_{\tilde\fm}
\]
where the isomorphism is obtained from \cite[2.2]{J}.  Since this homology is isomorphic to $R$ for $i=0$ and vanishes otherwise, we have the lifting of $R$ to $P$ via $Q$.

Next we establish the grade inequality.  As $S=P/(\mathbf{f})P$ is Gorenstein and $Y_1,\ldots,Y_m$ is regular on $S$, we have that
\[
S/(Y_1,\ldots,Y_m)S\cong P_X/(\mathbf{f})
\]
is also Gorenstein; in particular, $P_X/(\mathbf{f})$ is perfect as a module over $P_X$.  This fact implies the last of the following equalities:
\[
\grade_P(S,P)=\pd_P S=\pd_{P_X}P_X/(\mathbf{f})=\grade_{P_X}(P_X/(\mathbf{f}),P_X)
\]

Furthermore, note the following isomorphisms of complexes
\begin{align*}\tag{\ddag}
\begin{split}
\Hom_P((\mathbf{F}\otimes P_Y)_{\tilde\fm},Q)&\cong \Hom_{P_X\otimes P_Y}(\mathbf{F}\otimes P_Y,P_X\otimes P_Y/(\mathbf{g}))_{\tilde\fm}\notag\\
&\cong(\Hom_{P_X}(\mathbf{F},P_X)\otimes\Hom_{P_Y}(P_Y,P_Y/(\mathbf{g})))_{\tilde\fm}\\
&\cong(\Hom_{P_X}(\mathbf{F},P_X)\otimes P_Y/(\mathbf{g}))_{\tilde\fm}\notag
\end{split}
\end{align*}
the second of which is obtained from \cite[Proof of Lemma 2.5(1)]{J}.  Now since
\begin{align*}
\HH\left(\Hom_{P_X}(\mathbf{F},P_X)\otimes P_Y/(\mathbf{g})\right)&=\Ext^{\pd_P S}_{P_X}(P_X/(\mathbf{f}),P_X)\otimes P_Y/(\mathbf{g})\\
&\cong P_X/(\mathbf{f})\otimes P_Y/(\mathbf{g})
\end{align*}
is nonzero upon localizing at $\tilde\fm$, we have $\grade_P(S,Q)=\grade_{P_X}(P_X/(\mathbf{f}),P_X)$, and therefore $\grade_P(S,P)=\grade_P(S,Q)$.  In particular, the inequality holds.

Finally, we verify that $Q\to R$ is Gorenstein.  To do this, recall that the vanishing of $\Tor_i^P(S,Q)$ for $i>0$ implies that $\Ext_Q^i(R,Q)=\Ext_P^i(S,Q)$ for all $i\in\NN$.  Thus,
\[
\grade_Q(R,Q)=\grade_P(S,Q)=\grade_P(S,P).
\]
Since, furthermore, $\pd_Q R=\pd_P S$ due to the lifting of $R$, we have established that $R$ is a perfect $Q$-module.  To verify that $Q\to R$ is Gorenstein, it is enough check that $\Ext_Q^{\pd_Q R}(R,Q)\cong R$.  However, this is equivalent to checking the same for $\Ext_P^{\pd_P S}(S,Q)$, which is obtained by taking homology of ($\ddag$).  To this end, we calculate:
\[
\Ext_P^{\pd_P S}(S,Q)\cong\left(P_X/(\mathbf{f})\otimes P_Y/(\mathbf{g})\right)_{\tilde\fm}\cong R
\]

Therefore Theorem \ref{thm:main} establishes the existence of non-trivial totally acyclic complexes over $R$.  In order to check the validity of the final statement, notice that if $(\mathbf{g})P$ is not a Gorenstein ideal of $P$, then $Q$ is Cohen-Macaulay but not Gorenstein.  Thus, the rank of the last nonzero free module in $\left(P_X\otimes \mathbf{G}\right)_{\tilde\fm}$ is greater than one.  Furthermore,
\[
S\otimes_P\left(P_X\otimes \mathbf{G}\right)_{\tilde\fm}\to R\to 0
\]
is a free resolution of $R$ over $S$, and its last nonzero free module must also have rank greater than one.  Since $S$ is assumed to be Gorenstein, we have shown that $R$ cannot be.
\medskip

This section concludes with a specific example of a ring $R$ which demonstrates the previous construction, and which, as we shall show in the next section, has no embedded deformation.

\begin{example}\label{ex:noembdef} Let $k$ be a field and $P=k[X_1,\ldots,X_5,Y_1,\ldots,Y_4]_\fm$, where $\fm=(X_1,\ldots,X_5,Y_1,\ldots,Y_4)$ is the homogeneous maximal ideal over the polynomial ring $k[X_1,\ldots,X_5,Y_1,\ldots,Y_4]$.  Now consider the local ring
$R=P/I$, where $I$ is defined by the following seventeen quadratics over $P$:\vspace{.1in}
\begin{center}
\mbox{$2X_1X_3+X_2X_3,\hspace{.05in} X_1X_4+X_2X_4,\hspace{.05in} X_3^2+2X_1X_5-X_2X_5$}\vspace{.075in}
\mbox{$X_4^2+X_1X_5-X_2X_5,\hspace{.05in} X_1^2,\hspace{.05in} X_2^2,\hspace{.05in} X_3X_4,\hspace{.05in}
X_3X_5,\hspace{.05in} X_4X_5,\hspace{.05in} X_5^2$}\vspace{.075in}\\
\mbox{$Y_1^2,\hspace{.05in}Y_1Y_2-Y_3^2,\hspace{.05in}Y_1Y_3-Y_2Y_4,\hspace{.05in}Y_1Y_4,\hspace{.05in}Y_2^2+Y_3Y_4,\hspace{.05in}Y_2Y_3,\hspace{.05in}Y_4^2$}
\end{center}\vspace{.1in}
We first notice that $R\cong S\otimes_PQ$, where $S=P/J$, $Q=P/K$, and $J$ and $K$ are the ideals generated
by\vspace{.1in}
\begin{center}
\mbox{$2X_1X_3+X_2X_3,\hspace{.05in} X_1X_4+X_2X_4,\hspace{.05in}
X_3^2+2X_1X_5-X_2X_5$}\vspace{.075in}
\mbox{$X_4^2+X_1X_5-X_2X_5,\hspace{.05in} X_1^2,\hspace{.05in}
X_2^2,\hspace{.05in} X_3X_4,\hspace{.05in} X_3X_5,\hspace{.05in} X_4X_5,\hspace{.05in} X_5^2$}
\end{center}\vspace{.05in}
and\vspace{.05in}
\[Y_1^2,\hspace{.05in}Y_1Y_2-Y_3^2,\hspace{.05in}Y_1Y_3-Y_2Y_4,\hspace{.05in}Y_1Y_4,\hspace{.05in}Y_2^2+Y_3Y_4,\hspace{.05in}Y_2Y_3,\hspace{.05in}Y_4^2\]
respectively, over $P$.  This noted, it is clear from the discussion in the previous
construction that $R$ fits the criteria for Theorem \ref{thm:main}, and so we are guaranteed that it admits
non-trivial totally acyclic complexes. However, as we will demonstrate in the subsequent section, $R$ does not have an embedded deformation.
\end{example}

\section{On The Homotopy Lie Algebra}

In this section, we investigate a method which uses the homotopy Lie algebra of a local ring to determine if it
has an embedded deformation.  Before giving results, we first review some related facts.

Recall that any surjective local homomorphism $Q\ra R$ induces a map $\pi^*(R)\ra\pi^*(Q)$ on
the respective graded homotopy Lie algebras.  If, furthermore, $Q$ is an embedded deformation of $R$, then
the natural map $\pi^*(R)\ra\pi^*(Q)$ is surjective and its kernel is comprised of the central elements of
$\pi^2(R)$; for details, see \cite[(6.1)]{A}.  For a local ring $(R,\fm,k)$ the universal enveloping algebra of $\pi^*(R)$ is precisely the graded $k$-algebra $\Ext_R^*(k,k)$.  If $R$ is moreover a Koszul algebra, then $\Ext_R^*(k,k)$ is generated as a $k$-algebra by $\Ext_R^1(k,k)$; for details, see \cite[Theorem 1.2]{L}.

The following result, which characterizes the algebra generated by $\Ext_R^1(k,k)$ for a quadratic ring $R$, is a special case of a result of L\"{o}fwall in \cite{L}.  Its proof has been omitted, but details can be found in \cite{L}.

\begin{theorem}\cite[Corollary 1.3]{L}\label{thm:lofwall}
Let $k$ be a field and define the $k$-algebra $R=k[x_1,\ldots,x_n]/(f_1,\ldots,f_r)$, where
\[
f_i=\sum_{j\leq\ell} a_{ij\ell}x_jx_\ell
\]
with each $a_{ij\ell}\in k$, are homogenous for $1\leq i\leq r$.  Then the algebra generated by the degree one elements in $\Ext^*_R(k,k)$ is given by
\[
\left[\Ext_R^1(k,k)\right]=k\langle T_1,\ldots,T_n\rangle/(\varphi_1,\ldots,\varphi_s)
\]
where, for $1\leq i\leq s$, we define
\[
\varphi_i=\sum_{j\leq \ell}c_{ij\ell}\left[T_j,T_\ell\right]
\]
such that $c_{ij\ell}\in k$ and $\left[T_j,T_\ell\right]=T_jT_\ell+T_\ell T_j$.  Furthermore, the $(c_{ij\ell})_{j\ell}$ form a basis for the solution set to the system of linear equations given by
\[
\sum_{j\leq \ell}a_{ij\ell}x_{j\ell}=0
\]
for $1\leq i\leq r$.  That is, $(c_{ij\ell})_{j\ell}$ forms a basis for the nullspace of the matrix given by:
\[
\left[\begin{array}{cccccc}
a_{111} & \cdots & a_{1nn}\\
\vdots & \ddots & \vdots\\
a_{r11} & \cdots & a_{rnn}
\end{array}\right]
\]
\end{theorem}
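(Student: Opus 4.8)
The plan is to identify $[\Ext_R^1(k,k)]$ with the quadratic dual algebra $R^!$ of $R$ and then to compute $R^!$ by elementary linear algebra. Regard $R=T(V)/(W)$ as a quadratic algebra (it is quadratic, since its defining ideal is generated by quadrics), where $V=\bigoplus_{i=1}^n kx_i$ is the space of degree-one elements and the relation space $W\subseteq V\otimes V$ is spanned by the commutators $x_j\otimes x_\ell-x_\ell\otimes x_j$ (for $j<\ell$), which impose commutativity, together with the lifts $\widehat f_i=\sum_{j\le\ell}a_{ij\ell}\,x_j\otimes x_\ell$ of the defining quadrics. The substantive ingredient is the fact, valid for any quadratic algebra, that the subalgebra of the Yoneda algebra generated by its one-dimensional elements is the quadratic dual: $[\Ext_R^1(k,k)]\cong R^!=T(V^*)/(W^\perp)$, where $W^\perp\subseteq V^*\otimes V^*$ is the annihilator of $W$ under the perfect pairing $V^*\otimes V^*=(V\otimes V)^*$; this is precisely \cite[Corollary 1.3]{L}. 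Writing $V^*=\bigoplus_{i=1}^n kT_i$ with $T_i$ dual to $x_i$, the theorem then reduces to describing $W^\perp$ explicitly.

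To do so, take $\psi=\sum_{a,b}e_{ab}\,T_a\otimes T_b\in V^*\otimes V^*$. Pairing $\psi$ with a commutator $x_j\otimes x_\ell-x_\ell\otimes x_j$ yields $e_{j\ell}-e_{\ell j}$, so membership in $W^\perp$ forces $\psi$ to be symmetric; under the convention $[T_j,T_j]=T_j^2$, such a $\psi$ is the image in $T(V^*)$ of $\sum_{j\le\ell}e_{j\ell}[T_j,T_\ell]$. Pairing $\psi$ with $\widehat f_i$ yields $\sum_{j\le\ell}a_{ij\ell}e_{j\ell}$, so the remaining constraints are exactly $\sum_{j\le\ell}a_{ij\ell}e_{j\ell}=0$ for $1\le i\le r$. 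Thus $\psi\mapsto(e_{j\ell})_{j\le\ell}$ identifies $W^\perp$ with the nullspace of the matrix $(a_{ij\ell})$, and a basis $(c_{ij\ell})_{j\le\ell}$, $1\le i\le s$, of that nullspace corresponds to the basis $\varphi_i=\sum_{j\le\ell}c_{ij\ell}[T_j,T_\ell]$ of $W^\perp$. The dimension count is consistent: the commutators and the $\widehat f_i$ are linearly independent in $V\otimes V$, so $\dim_k W=\binom n2+\rank(a_{ij\ell})$, whence $\dim_k W^\perp=n^2-\binom n2-\rank(a_{ij\ell})=\binom{n+1}2-\rank(a_{ij\ell})=s$. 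This gives $R^!=k\langle T_1,\dots,T_n\rangle/(\varphi_1,\dots,\varphi_s)$, as asserted.

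The only step that is not formal is the isomorphism $[\Ext_R^1(k,k)]\cong R^!$. One half is easy: in internal degree two the Yoneda product $\Ext_R^1(k,k)\otimes\Ext_R^1(k,k)\to\Ext_R^2(k,k)$ is dual to the inclusion $W=\Tor_2^R(k,k)_2\hookrightarrow V\otimes V$ coming from the coalgebra structure on $\Tor_\bullet^R(k,k)$, hence has kernel $W^\perp$; this exhibits $\varphi_1,\dots,\varphi_s$ as relations in $[\Ext_R^1(k,k)]$ and gives a surjection $R^!\twoheadrightarrow[\Ext_R^1(k,k)]$. The hard point is injectivity — that no further relations appear, equivalently that the lowest internal-degree strand $\Ext_R^n(k,k)_n$ is canonically dual to $R^!_n$ (via the linear strand of the minimal graded resolution of $k$, i.e.\ the Koszul subspaces $\bigcap_i V^{\otimes i}\otimes W\otimes V^{\otimes(n-2-i)}$), with Yoneda product dual to comultiplication. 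This is exactly what \cite[Corollary 1.3]{L} supplies, so I would cite it rather than reprove it. (In characteristic two the decomposition $V\otimes V=\mathrm{Sym}^2V\oplus\Lambda^2V$ is unavailable, but the computation of $W^\perp$ above uses only the explicit spanning set of $W$ and is unaffected, with $[T_j,T_\ell]=T_jT_\ell+T_\ell T_j$ taken as written.)
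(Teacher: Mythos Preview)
The paper does not prove this theorem at all; it explicitly states that the proof is omitted and refers the reader to L\"ofwall's original paper \cite{L}. So there is no ``paper's own proof'' to compare against.

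Your sketch is mathematically sound and follows the standard route: regard $R$ as a quadratic algebra $T(V)/(W)$, identify the subalgebra generated by $\Ext_R^1(k,k)$ with the quadratic dual $R^!=T(V^*)/(W^\perp)$, and then compute $W^\perp$ by elementary linear algebra. The computation of $W^\perp$ is carried out correctly. There is, however, a circularity in the write-up: you invoke \cite[Corollary~1.3]{L} for the isomorphism $[\Ext_R^1(k,k)]\cong R^!$, but that corollary \emph{is} the statement being proved here (the paper labels Theorem~\ref{thm:lofwall} as exactly that result). What you have really done is unpack the statement into (i) the abstract quadratic-duality fact $[\Ext_R^1(k,k)]\cong R^!$ and (ii) the explicit description of $R^!$ for this particular $R$; you prove (ii) and defer (i). That is a perfectly reasonable decomposition, and your outline of why (i) holds---the surjection from the kernel computation in internal degree two, and injectivity via the linear strand of the minimal resolution---is correct, but as written it is a restatement rather than an independent proof. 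If the goal is a self-contained argument you would need to actually carry out the linear-strand computation you call ``the hard point.''

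One small technical wrinkle: your convention $[T_j,T_j]=T_j^2$ differs from the paper's $[T_j,T_\ell]=T_jT_\ell+T_\ell T_j$, which gives $[T_j,T_j]=2T_j^2$. In characteristic $\neq 2$ this is immaterial after rescaling the diagonal coefficients, but your parenthetical remark about characteristic $2$ should be checked against whichever convention you ultimately adopt.
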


\medskip

As a consequence of L\"{o}fwall's result, we are able to consider degree two elements of the homotopy Lie algebra of a Koszul algebra $R$ as quadratic forms in $[\Ext_R^1(k,k)]$.  The following result illustrates this process in an extension of Theorem \ref{thm:lofwall}.

\begin{lemma}\label{lem:central}
Let $k$ be a field and consider the $k$-algebras given by
\begin{align*}
Q&=k[x_1,\ldots,x_n]/(f_1,\ldots,f_r)\\[.025in]
S&=k[y_1,\ldots,y_m]/(g_1,\ldots,g_s)
\end{align*}
where, for all $1\leq i\leq r$ and $1\leq j\leq s$, the $f_i,g_j$ are homogeneous forms such that $Q$ and $S$ are finite dimensional and Koszul.  If $R=Q\otimes_k S$ then $R$ is local and
\[
\pi^*(R)\cong\pi^*(Q)\times\pi^*(S).
\]
In particular, $\pi^*(R)$ has nonzero central elements of degree two if and only if either $\pi^*(Q)$ or $\pi^*(S)$ does.
\end{lemma}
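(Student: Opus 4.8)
The plan is to read the decomposition off the cohomology algebra $\Ext_R^*(k,k)$. First observe that $R\cong k[x_1,\dots,x_n,y_1,\dots,y_m]/(f_1,\dots,f_r,g_1,\dots,g_s)$ is a finite-dimensional standard graded $k$-algebra whose irrelevant ideal is nilpotent; hence $R$ is Artinian local with residue field $k$ and possesses a homotopy Lie algebra in the usual sense, as do $Q$ and $S$.

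The main step is to establish an isomorphism of graded Hopf $k$-algebras $\Ext_R^*(k,k)\cong\Ext_Q^*(k,k)\otimes_k\Ext_S^*(k,k)$. I would prove this by tensoring resolutions. Let $\mathbf{F}\to k$ and $\mathbf{G}\to k$ be minimal free resolutions over $Q$ and $S$; regarding them as complexes of $k$-vector spaces, the K\"unneth theorem gives $\HH(\mathbf{F}\otimes_k\mathbf{G})=k$, so $\mathbf{F}\otimes_k\mathbf{G}$ is a complex of free modules over $R=Q\otimes_k S$ resolving $k$, and it is minimal since its differential has entries in the maximal ideal of $R$. Applying $-\otimes_R k$ then gives $\Tor_*^R(k,k)\cong\Tor_*^Q(k,k)\otimes_k\Tor_*^S(k,k)$, compatibly with the comultiplications; dualizing over $k$ (each graded component being finite-dimensional) yields the desired isomorphism, which respects all of the Hopf structure. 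This is standard; I would cite \cite{A}.

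Next I would pass to homotopy Lie algebras. Since $\Ext_A^*(k,k)$ is the universal enveloping algebra of $\pi^*(A)$ for each $A\in\{Q,S,R\}$, and since forming enveloping algebras sends a product of graded Lie algebras to the graded tensor product of Hopf algebras, the preceding step identifies $U(\pi^*(R))$ with $U(\pi^*(Q)\times\pi^*(S))$; because the homotopy Lie algebra, together with its bracket, is recovered from this Hopf algebra by a procedure functorial in $R$ that converts tensor products into products, it follows that $\pi^*(R)\cong\pi^*(Q)\times\pi^*(S)$. (The Koszul hypothesis is not needed for this isomorphism, but it guarantees that the three algebras $\Ext^*(k,k)$ are generated in degree one, cf.\ \cite{L}, which is what puts us in the quadratic framework of Theorem~\ref{thm:lofwall}.) Finally, for the last assertion: in a product $\mathfrak{g}\times\mathfrak{h}$ of graded Lie algebras the bracket is componentwise, so its center is $Z(\mathfrak{g})\times Z(\mathfrak{h})$; intersecting with degree two, the degree-two central elements of $\pi^*(R)$ are precisely the pairs whose first coordinate is central in $\pi^*(Q)$ and whose second coordinate is central in $\pi^*(S)$, and this space is nonzero exactly when one of $\pi^*(Q)$, $\pi^*(S)$ has a nonzero degree-two central element.

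The step requiring genuine care is the middle one --- verifying that the K\"unneth isomorphism for $\Tor$ is compatible with the full Hopf-algebra structure and that $\pi^*$ is correctly recovered from $\Ext^*(k,k)$; in positive characteristic this means interpreting ``enveloping algebra'' and ``graded Lie algebra'' in the divided-power sense. Rather than reprove this, I would invoke the homotopy Lie algebra formalism of \cite{A}. Alternatively, one can run the argument through the cotangent complex, using that base change along $R\to k$ splits $L_{R/k}\otimes_R k$ as $(L_{Q/k}\otimes_Q k)\oplus(L_{S/k}\otimes_S k)$, whence the shifted tangent complex of $R$ is the product of those of $Q$ and $S$.
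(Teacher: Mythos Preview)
Your argument is correct, but it follows a genuinely different route from the paper's. The paper works entirely through L\"ofwall's quadratic duality (Theorem~\ref{thm:lofwall}): it writes $R\cong k[x_1,\dots,x_n,y_1,\dots,y_m]/(f_1,\dots,f_r,g_1,\dots,g_s)$, applies Theorem~\ref{thm:lofwall} to $R$ directly, and observes that the relations in $[\Ext_R^1(k,k)]$ fall into three blocks --- those coming from the nullspace of the $a_{ij\ell}$-matrix, those from the $b_{ij\ell}$-matrix, and the $nm$ additional relations $[T_j,U_\ell]=0$ forced by the block-diagonal shape. This yields $[\Ext_R^1(k,k)]\cong[\Ext_Q^1(k,k)]\otimes_k[\Ext_S^1(k,k)]$, and Koszulity is used so that this subalgebra is the full Ext-algebra and $\pi^*(R)$ sits inside it. Your approach instead tensors minimal resolutions, invokes K\"unneth to get the Hopf-algebra isomorphism on $\Tor$, and recovers the Lie algebra via the enveloping-algebra formalism.

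In fact the paper explicitly acknowledges your approach in the Remark immediately following the lemma: it notes that the $\Tor$-K\"unneth argument works without the Koszul hypothesis, but that doing it rigorously requires the machinery of Hopf algebras with divided powers (citing Andr\'e, Milnor--Moore, and Sj\"odin), and that the authors \emph{deliberately} chose the L\"ofwall computation to avoid that. So your proof buys generality (no Koszul assumption needed) at the cost of invoking a heavier black box; the paper's proof is more elementary and self-contained for the quadratic Koszul case actually needed in the application, at the cost of the extra hypothesis. Your honest flagging of the divided-power subtlety in positive characteristic is exactly the point the paper is sidestepping.
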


\begin{proof}
Since the polynomials which define $Q$ and $S$ are homogeneous quadratics, we can express them
as
\[
f_i=\sum_{j\leq \ell}a_{ij\ell}x_j x_\ell
\]
for $1\leq i\leq r$, and
\[
g_i=\sum_{j\leq \ell}b_{ij\ell}y_j y_\ell
\]
for $1\leq i\leq s$.  Further, since
\[
R=Q\otimes_k S\cong k[x_1,\ldots,x_n,y_1,\ldots,y_m]/(f_1,\ldots,f_r,g_1,\ldots,g_s)
\]
we see that Theorem \ref{thm:lofwall} implies
\[
\left[\Ext_R^1(k,k)\right]=k\langle T_1,\ldots,T_n,U_1,\ldots,U_m\rangle/(\varphi_1,\ldots,\varphi_\alpha)
\]
where $\alpha=(n+m)(n+m+1)/2-(r+s)$.  For simplicity in notation, we let $\beta=n(n+1)/2-r$ and
$\gamma=m(m+1)/2-s$, so that $\alpha=\beta+\gamma+nm$.  With this established, the first $\beta+\gamma$ of the $\varphi_i$ are given by
\[
\varphi_i=\left\{\begin{array}{ll} \displaystyle{\sum_{j\leq \ell}c_{ij\ell}[T_j,T_\ell]} & 1\leq
i\leq\beta\vspace{.05in}\\
\displaystyle{\sum_{j\leq \ell}c_{ij\ell}[U_j,U_\ell]} & \beta+1\leq
i\leq\beta+\gamma\vspace{.05in} \end{array}\right.
\]
where $c_{ij\ell}\in k$ is defined in such a way that $(c_{ij\ell})_{j\ell}$ forms basis for the kernel of the $(r+s)\times(\beta+\gamma)$ matrix given by:
\[
\left[\begin{array}{cccccc} a_{111} & \cdots & a_{1nn} &&& \\
\vdots & \ddots & \vdots && \mbox{\Huge{0}} &\\
a_{r11} & \cdots & a_{rnn} &&& \\
&&& b_{111} & \cdots & b_{1mm} \\
& \mbox{\Huge{0}} && \vdots & \ddots
& \vdots \\
&&& b_{s11} & \cdots & b_{smm} \end{array} \right]
\]
Furthermore, let
\[
\sigma:\left\{(j,\ell)\in\ZZ^2\var 1\leq j\leq n,\hspace{.03in}1\leq \ell\leq m\right\}\ra\left\{i\in\ZZ\var 1\leq i\leq nm\right\}
\]
be a bijection; then the last $nm$ of the $\varphi_i$ are given by
\[
\varphi_{\beta+\gamma+\sigma\left((j,\ell)\right)}=\left[T_j,U_\ell\right]
\]
for $1\leq j\leq n$ and $1\leq \ell\leq m$.  Letting $t_i$ (resp.\ $u_i)$ denote the image in $\Ext_R^1(k,k)$ of $T_i$ (resp.\ $U_i)$, it
follows that $[t_j,u_\ell]=[u_\ell,t_j]=0$ for every $1\leq j\leq n$ and $1\leq \ell\leq m$.  Now it follows that
\[
[\Ext_R^1(k,k)]\cong k\langle T_1,\ldots,T_n\rangle/(\varphi_1,\ldots,\varphi_\beta)\otimes_k k\langle U_1,\ldots,U_m\rangle/(\varphi_{\beta+1},\ldots,\varphi_{\beta+\gamma}).
\]
Furthermore as $R$ is assumed to be Koszul, $\pi^*(R)$ can be viewed as a linear subspace of the above expression via the natural inclusion $\pi^*(R)\hookrightarrow\Ext_R^*(k,k)$.  This implies the result.
\end{proof}

\begin{remark}
The statement of Lemma \ref{lem:central} holds even when $Q$, $S$, and $R$ are local, but not necessarily Koszul.  To see this, notice that we have an induced isomorphism $\Tor_R^*(k,k)\cong\Tor_Q^*(k,k)\otimes_k\Tor_S^*(k,k)$ of $k$-algebras which extends to an isomorphism of Hopf algebras with divided powers.  Moreover, one can show that this isomorphism is equivalent to an isomorphism of homotopy Lie algebras by considering the equivalence of the respective categories. (For details, see \cite{An}, \cite{MM}, \cite{S}.)  Despite this more general fact, we have chosen to include the machinery of Theorem \ref{thm:lofwall} and Lemma \ref{lem:central} so that we may justify the following result without needing the rigor which is required of Hopf algebras.
\end{remark}

With these results established, we are now ready to prove the assertion at the end of the previous section.

\begin{fact}
The local ring $R$ defined in Example \ref{ex:noembdef} does not have an embedded deformation.
\end{fact}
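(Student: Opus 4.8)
The plan is to apply the criterion recalled at the start of Section~4: if $R$ has an embedded deformation, then there is a surjection $\pi^*(R)\to\pi^*(Q')$ whose kernel consists of \emph{central} elements of $\pi^2(R)$, and in particular $\pi^2(R)$ must contain a nonzero central element. So it suffices to show that the homotopy Lie algebra of $R$ from Example~\ref{ex:noembdef} has \emph{no} nonzero central element in degree two. The ring $R$ is defined by seventeen quadrics and splits as $R\cong Q\otimes_k S$ in the variables $\{X_i\}$ versus $\{Y_j\}$ — indeed the seventeen quadrics are exactly the ten $X$-quadrics defining $S=k[X_1,\dots,X_5]/J$ together with the seven $Y$-quadrics defining $Q=k[Y_1,\dots,Y_4]/K$ (after localizing, but homotopy Lie algebra data only depends on the associated graded quadratic algebra). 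First I would check that $Q$ and $S$ are finite-dimensional and Koszul — each is a quadratic complete intersection-like algebra, and in fact each presenting ideal is generated by a quadric Gr\"obner basis, so the rings are Koszul; this lets us invoke Lemma~\ref{lem:central}. By that lemma, $\pi^2(R)$ has a nonzero central element if and only if $\pi^2(S)$ or $\pi^2(Q)$ does.

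So the problem reduces to two independent computations: show that neither $\pi^*(S)$ nor $\pi^*(Q)$ has a nonzero central element in degree two. For each, I would use Theorem~\ref{thm:lofwall} to write down $[\Ext^1(k,k)]=k\langle T_1,\dots\rangle/(\varphi_1,\dots)$, where the relations $\varphi_i=\sum_{j\le\ell}c_{ij\ell}[T_j,T_\ell]$ are indexed by a basis of the nullspace of the coefficient matrix of the defining quadrics. Since $S$ and $Q$ are Koszul, $\pi^2(S)$ (resp.\ $\pi^2(Q)$) is identified with the degree-two part of this algebra, i.e.\ the span of the brackets $[T_j,T_\ell]$ modulo the relations $\varphi_i$. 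A degree-two element $\xi=\sum\lambda_{j\ell}[T_j,T_\ell]$ is central iff $[\xi,T_p]=0$ in the algebra for every generator $T_p$; expanding via the Jacobi identity, this becomes a system of linear conditions on the $\lambda_{j\ell}$, which one solves and checks has only the zero solution modulo the relation space. Concretely, for each of $S$ and $Q$ I would: (i) compute the nullspace of the quadric-coefficient matrix to get the relations $\varphi_i$; (ii) parametrize a general degree-two element; (iii) impose $[\,\cdot\,,T_p]=0$ for all $p$ and solve. The $Y$-side ($Q$, $4$ variables, $7$ quadrics) is small; the $X$-side ($S$, $5$ variables, $10$ quadrics) is larger but the quadrics were evidently engineered so that the bracket relations kill every candidate central element.

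The main obstacle I anticipate is the linear-algebra bookkeeping on the $X$-side: with five variables there are $15$ possible brackets $[T_j,T_\ell]$, and one must carefully track the relation submodule coming from the nullspace of a $10\times 15$ matrix and then the further conditions forced by centrality; keeping signs and the symmetric bracket convention $[T_j,T_\ell]=T_jT_\ell+T_\ell T_j$ straight is where errors creep in. A secondary point to be careful about is the reduction to the graded case: I would note explicitly that passing from the localized $R=P/I$ to its associated graded (quadratic) algebra does not change $\pi^{\le 2}$, or alternatively observe that $R$ is already a (localization of a) Koszul algebra so that the homotopy Lie algebra in low degrees is computed from the quadratic data directly, legitimizing the use of Theorem~\ref{thm:lofwall} and Lemma~\ref{lem:central}. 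Once both computations yield ``no nonzero central degree-two element,'' Lemma~\ref{lem:central} gives the same for $R$, and the embedded-deformation criterion of \cite[(6.1)]{A} forces the conclusion that $R$ has no embedded deformation.
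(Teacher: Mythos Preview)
Your proposal is correct and follows essentially the same route as the paper: reduce via Lemma~\ref{lem:central} to the two factors, then use L\"ofwall's description to rule out nonzero degree-two central elements in each. The paper handles the $Q$-side by citing \cite{AGP2} rather than computing directly, and for $S$ it makes explicit one step your sketch glosses over---namely, verifying (via the deviation $\varepsilon_3(S)=16$) that the degree-three elements $[u_i,t_j]$ against which you test centrality are linearly independent, so that the vanishing of all $[\xi,t_p]$ genuinely forces $\xi=0$.
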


\begin{proof}
It suffices to show that $\pi^*(R)$ has no non-trivial central elements of degree 2.  By Lemma \ref{lem:central}, this condition is equivalent to neither $\pi^*(S)$ nor $\pi^*(Q)$ containing such elements.  In \cite[Example 2.1 \& Section 3]{AGP2}, Avramov, Gasharov, and Peeva prove this condition for $\pi^*(Q)$, so we only  need to show the result for $\pi^*(S)$.  We shall adopt the same approach as the authors of \cite{AGP2}.

As a result of \cite[Corollary 1.3]{L}, we know that the algebra generated by the universal enveloping algebra of $\pi^*(S)$ can be expressed as
\[
\left[\Ext_S^1(k,k)\right]\cong k\langle T_1,T_2,T_3,T_4,T_5\rangle/I
\]
where $I$ is generated by
\begin{center}
\mbox{$T_1T_2+T_2T_1,\quad(T_1T_3+T_3T_1)-2(T_2T_3+T_3T_2),\quad (T_1T_4+T_4T_1)-(T_2T_4+T_4T_2)$}\\[0.05in]
\mbox{$T_3^2+T_4^2+(T_2T_5+T_5T_2),\quad T_3^2+(T_1T_5+T_5T_1)+(T_2T_5+T_5T_2)$.}
\end{center}

So it follows that $\pi^*(S)$ is a graded Lie algebra on the variables $t_1,t_2,t_3,t_4,t_5$, each of degree one, which satisfies the relations
\begin{center}
\mbox{$[t_1,t_2]=0,\quad[t_1,t_3]=2[t_2,t_3],\quad [t_1,t_4]=[t_2,t_4]$}\\[0.05in]
\mbox{$t_3^{(2)}+t_4^{(2)}=-[t_2,t_5],\quad 2t_3^{(2)}+t_4^{(2)}=[t_1,t_5]$.}
\end{center}

It is straightforward to see that the following forms a basis of $\pi^2(S)$:
\[
\begin{array}{lllll}
u_1=t_1^{(2)} & u_2=t_2^{(2)} & u_3=t_3^{(2)} & u_4=t_4^{(2)} & u_5=t_5^{(2)}\\[0.05in]
u_6=[t_1,t_3] & u_7=[t_1,t_4] & u_8=[t_3,t_4] & u_9=[t_3,t_5] & u_{10}=[t_4,t_5]
\end{array}
\]

Furthermore, we assert that a basis of $\pi^3(S)$ is given by:
\[
v_i=\left\{\begin{array}{ll}
[u_{i+2},t_1] & 1\leq i\leq 8 \\[3pt]
[u_{i-2},t_3] & 9\leq i\leq 12 \\[3pt]
[u_{i-5},t_4] & 13\leq i\leq 14 \\[3pt]
[u_{i-6},t_5] & 15\leq i\leq 16
\end{array}\right.
\]
For the reader's convenience, and in order to justify these claims, we include a multiplication table for $\pi^3(S)$.

\medskip

\begin{center}
{
\renewcommand{\arraystretch}{1.5}
\begin{tabularx}{0.95\textwidth}{|c|C|C|C|C|C|}
\hline
$[u_i,t_j]$ & $t_1$ & $t_2$ & $t_3$ & $t_4$ & $t_5$\\
\hline
$u_1$ & 0 & 0 & $-2v_4$ & $-2v_5$ & $-4v_1+2v_2$\\
\hline
$u_2$ & 0 & 0 & $\frac{1}{2}v_4$ & $2v_5$ & $v_1+v_2$\\
\hline
$u_3$ & $v_1$ & $\frac{1}{2}v_1$ & 0 & $-2v_{10}$ & $-2v_{11}$\\
\hline
$u_4$ & $v_2$ & $\frac{1}{2}v_2$ & $-2v_{13}$ & 0 & $-\frac{1}{2}v_3+4v_{11}$\\
\hline
$u_5$ & $v_3$ & $-v_3+4v_{11}$ & $-2v_{15}$ & $-2v_{16}$ & 0\\
\hline
$u_6$ & $v_4$ & $-\frac{1}{2}v_4$ & $-\frac{1}{2}v_1$ & $-v_9-v_6$ & $-v_7+2v_{13}$\\
\hline
$u_7$ & $v_5$ & $-v_5$ & $v_9$ & $-\frac{1}{2}v_2$ & $-v_8+4v_{10}$\\
\hline
$u_8$ & $v_6$ & $\frac{1}{2}v_6-\frac{1}{2}v_9$ & $v_{10}$ & $v_{13}$ & $-v_{12}-v_{14}$\\
\hline
$u_9$ & $v_7$ & $\frac{1}{2}v_7-3v_{13}$ & $v_{11}$ & $v_{14}$ & $v_{15}$\\
\hline
$u_{10}$ & $v_8$ & $v_8-6v_{10}$ & $v_{12}$ & $\frac{1}{4}v_3-2v_{11}$ & $v_{16}$\\
\hline
\end{tabularx}
}
\end{center}

\medskip

It is clear from this table that the elements $v_1,\ldots,v_{16}$ span $\pi^3(S)$.  In order to justify their linear independence, we note that $\rank_k\pi^3(S)=\varepsilon_3(S)$, the third deviation of $S$, cf.\ \cite[Theorem 10.2.1(2)]{A2}.  This quantity can be calculated in terms of the Betti numbers of $k$ over $S$ as follows
\begin{align*}\tag{$\sharp$}
\begin{split}
\varepsilon_1&=b_1\notag\\
\varepsilon_2&=b_2-\binom{\varepsilon_1}{2}\\
\varepsilon_3&=b_3-\varepsilon_2\varepsilon_1-\binom{\varepsilon_1}{3}\notag
\end{split}
\end{align*}
(cf.\ \cite[Section 7]{A2}).  Calculating a minimal $S$-free resolution of $k$ yields that
\[
P^S_k(t)=1+5t+20t^2+76t^3+\cdots
\]
which we use to evaluate the expressions in ($\sharp$), and obtain $\varepsilon_3=16$.  Thus, $v_1,\ldots,v_{16}$ is in fact a basis of $\pi^3(S)$.

Now suppose $u=\sum_{i=1}^{10}\alpha_i u_i$ is central in $\pi^2(S)$.  Then $0=[u,t_1]=\sum_{i=3}^{10}\alpha_i v_{i-3}$ implies that $u=\alpha_1 u_1+\alpha_2 u_2$.  Furthermore, using the above table yields
\begin{align*}
0 &= [u,t_5]\\
&= \alpha_1[u_1,t_5]+\alpha_2[u_2,t_5]\\
&= (-4\alpha_1+\alpha_2)v_1+(2\alpha_1+\alpha_2)v_2
\end{align*}
which implies that $u=0$.  We have therefore proven that $\pi^2(S)$ does not contain nonzero central elements, and thus $R$ does not have an embedded deformation.
\end{proof}

Recalling that (Gorenstein) local rings of codimension at most 3 (resp.\ 4) have embedded deformations, we
have that the ring defined in Example \ref{ex:noembdef}, in the way of codimension, the smallest such
possible which satisfies the hypotheses of our main result, yet does not have an embedded deformation.

\section*{Acknowledgements}

I would like to thank Luchezar Avramov for providing many helpful remarks.  I am moreover indebted to my thesis advisor, David Jorgensen, not only for not reading many versions of this manuscript and offering invaluable suggestions, but also for several interesting conversations.  Finally, I am grateful to the referee for recommendations which improved the overall clarity of this paper.


\begin{thebibliography}{99}

\bibitem{An} M.\ Andre, \emph{Hopf algebras with divided powers}, J.~Algebra \textbf{18} (1971), 19--50.

\bibitem{Au} M.\ Auslander, \emph{Anneaux de Gorenstein, et torsion en elg\`{e}bre commutative}, Secr\'{e}tariat math\'{e}matique, Paris (1967). S\'{e}minaire d'Ale\`{e}bre Commutative dirig\'{e} par Pierre Samuel, 1966/67. Texte r\'{e}dig\'{e}, d'apr\`{e}s des expos\'{e}s de Maurice Auslander, par Marquerite Mangeney, Christian Peskine et Lucien Szpiro.  \'{E}cole Normale Sup\'{e}rieure de Jeunes Filles.

\bibitem{A2} L.\ L.\ Avramov, \emph{Infinite free resolutions}, Six lectures on commutative algebra (Bellaterra, 1996), 1--118, Progr. Math., 166, Birkh\"{a}user, Basel, 1998.

\bibitem{A} L.\ L.\ Avramov, \emph{Modules of finite virtual projective dimension}, Invent. Math. \textbf{96}
    (1989) 71--101.

\bibitem{AGP2} L.\ L.\ Avramov, V.\ N.\ Gasharov, and I.~V.~Peeva, \emph{A periodic module of infinite virtual
    projective dimension}, J.~Pure Appl.~Algebra \textbf{62} (1989) 1--5.

\bibitem{AGP} L.\ L.\ Avramov, V.\ N.\ Gasharov, and I.\ V.\ Peeva, \emph{Complete intersection dimension}, Publ.
    Math. I.~H.~E.~S. \textbf{86} (1997) 67--114.

\bibitem{AM} L.\ L.\ Avramov and A.\ Martsinkovsky, \emph{Absolute, relative, and Tate cohomology of modules of finite Gorenstein dimension}, Proc. London Math. Soc. (3) \textbf{85} (2002), 393--440.

\bibitem{BH} W.\ Bruns and J.\ Herzog, \emph{Cohen-Macaulay rings}, Cambridge studies in advanced mathematics,
    Vol. 39, Revised edition, Cambridge University Press, Cambridge, 1993.

\bibitem{BE} D.\ A.\ Buchsbaum and D.\ A.\ Eisenbud, \emph{Algebra structures for finite free resolutions, and some
              structure theorems for ideals of codimension 3}, Amer. J. Math. \textbf{99} (1977), no. 3, 447--485.

\bibitem{CH} L.\ W.\ Christensen and H.\ Holm, \emph{Ascent properties of Auslander categories}, Canad.~J.~Math. \textbf{61} (2009), 3026--3046.

\bibitem{JM} D.\ A.\ Jorgensen and W.\ F.\ Moore, \emph{Minimal intersections and vanishing (co)homology},
    J. Commutative Alg. \textbf{1}, no. 3 (2009), 507--536.

\bibitem{J} D.\ A.\ Jorgensen, \emph{On tensor products of rings and extension conjectures},
    J. Commutative Alg. \textbf{1}, no. 4 (2009), 635--646.

\bibitem{L} C.\ L\"{o}fwall, \emph{On the subalgebra generated by the one-dimensional elements in the Yoneda
    Ext-algebra}, Lect.~Notes Math., 1183. Spriger-Verlag, Berlin, 1986.

\bibitem{MM} J.\ W.\ Milnor and J.\ C.\ Moore, \emph{On the structure of Hopf algebras}, Ann.~Math. (2) \textbf{81} (1965), 211--264.

\bibitem{Rob} P.\ Roberts, \emph{Le th\'eor\`eme d'intersection}, C.~R.~Acad.~Sc.~Paris, S\'er I Math.
    \textbf{304} (1987).

\bibitem{S} G.\ Sj\"{o}din, \emph{Hopf algebras and derivations}, J.~Algebra \textbf{64} (1980), 218--229.

\bibitem{T} R.\ Takahashi, \emph{An uncountablby infinite number of indecomposable totally reflexive modules}, Nagoya Math.~J., 187 (2007), 35--48.

\bibitem{TW} R.\ Takahashi and K.-i.\ Watanabe, \emph{Totally reflexive modules constructed from smooth projective curves of genus $g\geq 2$}, Arch. Math. \textbf{89} (2007), 60--67.

\bibitem{Vel} O.\ Veliche, \emph{Construction of modules with finite homological dimensions}, J.~Algebra
    \textbf{250} (2002), 427--449.

\bibitem{Yo} Y.\ Yoshino, \emph{Modules of G-dimension zero over local rings with the cube of maximal ideal being zero}, Commutative algebra, singularities and computer algebra (Sinaia, 2002), 255--273, NATO Sci. Ser. II Math. Phys. Chem., 115, Kluwer Acad. Publ., Dordrecht, 2003.

\end{thebibliography}
\end{document}